\newtheorem{Theorem}{Theorem}%[section]
\newtheorem{Lemma}[Theorem]{Lemma}
\newtheorem{Corollary}[Theorem]{Corollary}
\newtheorem{Proposition}[Theorem]{Proposition}
\newtheorem{Remark}[Theorem]{Remark}
\newtheorem{Example}[Theorem]{Example}
\newtheorem{Question}[Theorem]{Question}
\newcommand{\fkm}{\mathfrak m}
\newcommand{\Rees}{\mathcal R}
\DeclareMathOperator{\Sym}{Sym}
\DeclareMathOperator{\Fitt}{Fitt}
\renewcommand{\tilde}{\widetilde}
\renewcommand{\bar}{\overline}
\title{
Note on length and multiplicity
of modules over two-dimensional regular local rings
}
\author{Futoshi Hayasaka}
\address{Department of Environmental and Mathematical Sciences, Okayama University, 3-1-1 Tsushimanaka, Kita-ku, Okayama, 700-8530, JAPAN}
\email{hayasaka@okayama-u.ac.jp}
\author{Vijay Kodiyalam}
\address{The Institute of Mathematical Sciences, Chennai, India and Homi Bhabha National Institute, Mumbai, India}
\email{vijay@imsc.res.in}
\keywords{length, multiplicity, integrally closed module, regular local ring, adjoint ideal}
\subjclass[2020]{Primary 13B22; Secondary 13H05}
\begin{document}

%%%%% Abstract %%%%%
\begin{abstract}
We give lower and upper bounds on the Buchsbaum-Rim multiplicity of finitely generated torsion-free modules over two-dimensional regular local rings, and conditions for them to attain the bounds. As consequences, we have formulae on the multiplicity of integrally closed modules. 
\end{abstract}

\maketitle

\markright{LENGTH AND MULTIPLICITY OF MODULES}
%\markleft{LENGTH AND MULTIPLICITY OF MODULES}

%%%%% Text %%%%%

\section{Introduction} %%%%%%%%%%%%%%%%%%

The theory of integrally closed ideals in two-dimensional regular local rings, which was founded by Zariski in \cite{Zrs1938, ZrsSml1960}, has been deeply studied and extended in some directions by several authors. 
This theory was extended in \cite{Kdy1995} 
to finitely generated torsion-free integrally closed modules over two-dimensional regular local rings, and 
an analogue of Zariski's product theorem was obtained. 
Some other analogue results have been obtained in \cite{KtzKdy1997, Mhn1997, KdyMhn2015, Sbt2019}. 

In this note, we give several inequalities on  the length and multiplicity of modules over two-dimensional regular local rings. Among other things, we give lower and upper bounds on the Buchsbaum-Rim multiplicity of modules in terms of the ideal of maximal minors and its adjoint. Moreover, giving conditions for them to  attain  the bounds, we have a formula for integrally closed modules, which can be viewed as an analogue of the  corresponding well-known fact for integrally closed ideals. 

In order to state the results, let $M$ be a finitely generated torsion-free module of rank $r$ 
over a two-dimensional regular local ring $R$, 
and let $F$ be the double $R$-dual $M^{\ast \ast}$ of $M$. The module $F$ is $R$-free containing $M$ canonically, and the quotient $F/M$ is of finite length. 
Let $I(M)$ be the ideal of maximal minors of a matrix  whose columns generate the module $M$ with respect to some basis of $F$. Let $e(I)$ denote the multiplicity of $I$ and $e(M)$ denote the Buchsbaum-Rim multiplicity of $M$. Then our results can be summarized as follows: 

\begin{Theorem}\label{main}
Let $(R, \fkm)$ be a two-dimensional regular local ring with infinite residue field, and 
let $M$ be a finitely generated torsion-free $R$-module with ideal of maximal minors
$I=I(M)$. 
Then we have the inequalities:  
$$e(I)-\lambda(R/adj(I)) \leq e(M) \leq \lambda(F/M)+\lambda(R/adj(I))$$
where $adj(I)$ denotes the adjoint of $I$. Moreover, 
\begin{enumerate}
    \item The equality 
    $e(M) = \lambda (F/M)+\lambda (R/adj(I))$  holds if and only if 
    the module $M$ is integrally closed. 
    \item The equality $e(M)=e(I)-\lambda (R/adj(I))$ holds 
    if and only if the ideal $I$ is integrally closed with $  \lambda (R/I)=e(M)$.
\end{enumerate}
\end{Theorem}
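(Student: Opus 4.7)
The plan is to reduce both inequalities, via a single minimal reduction $N \subseteq M$, to Lipman's formula
$$e(J) \;=\; \lambda(R/J) + \lambda(R/adj(J))$$
for an integrally closed $\fkm$-primary ideal $J$ in a two-dimensional regular local ring. Because the residue field is infinite, one can choose $N$ generated by $r+1$ elements, so that $e(M) = e(N)$ and the Eagon--Northcott resolution of the $r \times (r+1)$ generator matrix yields the classical Buchsbaum--Rim identity
$$e(N) \;=\; \lambda(R/I(N)) \;=\; \lambda(F/N).$$
Moreover $N \subseteq M$ being a reduction of modules forces $I(N) \subseteq I$ to be a reduction of ideals (using $\overline{I(M)} = I(\bar{M})$ from \cite{Kdy1995}), so $\overline{I(N)} = \bar{I}$, and hence $e(I(N)) = e(I)$ and $adj(I(N)) = adj(I)$.

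For the lower bound, Lipman's formula applied to $\overline{I(N)}$ gives $e(I) = \lambda(R/\overline{I(N)}) + \lambda(R/adj(I))$, so
$$e(M) \;=\; \lambda(R/I(N)) \;\geq\; \lambda(R/\overline{I(N)}) \;=\; e(I) - \lambda(R/adj(I)),$$
with equality iff $I(N) = \overline{I(N)}$. Since $I(N) \subseteq I \subseteq \bar{I} = \overline{I(N)}$, this is equivalent to $I$ being integrally closed together with $I(N) = I$, and the latter is in turn equivalent to $\lambda(R/I) = \lambda(R/I(N)) = e(M)$, proving (2).

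For the upper bound, the key identity to establish is
$$\lambda(\bar{M}/N) \;=\; \lambda(R/adj(I)),$$
the module-theoretic counterpart of Lipman's formula: in rank one it is Lipman's formula directly, and in general it follows from Kodiyalam's description of integrally closed modules over two-dimensional regular local rings \cite{Kdy1995} applied to $\bar{M}$, together with Lipman's formula for the integrally closed ideal $I(\bar{M}) = \bar{I}$. Granting this, $\lambda(M/N) \leq \lambda(\bar{M}/N) = \lambda(R/adj(I))$, and decomposing $\lambda(F/N) = \lambda(F/M) + \lambda(M/N)$ gives
$$e(M) \;=\; \lambda(F/N) \;\leq\; \lambda(F/M) + \lambda(R/adj(I)),$$
with equality iff $M = \bar{M}$, proving (1).

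The main obstacle is the identity $\lambda(\bar{M}/N) = \lambda(R/adj(I))$ in arbitrary rank; once this module-theoretic analogue of Lipman's formula is in hand, the rest of the argument is routine bookkeeping with reductions, integral closures and lengths of ideals and modules.
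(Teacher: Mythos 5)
Your overall architecture matches the paper's: pass to a minimal reduction $N$ with $\mu(N)=r+1$, use $e(M)=\lambda(F/N)=\lambda(R/I(N))$, and compare $M$ with $\bar{M}$ and $I(N)$ with $\bar{I}$. Your lower bound and its equality case are complete and correct, and in fact slightly more direct than the paper's route: the paper derives $e(I)-e(M)\leq\lambda(R/adj(I))$ (Corollary \ref{upper2}) by passing through the length-multiplicity identity for $\bar{M}$ and Theorem \ref{upper1}, whereas you apply Lipman's formula to $\bar{I}=\overline{I(N)}$ and read off $\lambda(R/I(N))\geq\lambda(R/\bar{I})=e(I)-\lambda(R/adj(I))$ directly; both yield the same equality criterion $I(N)=\bar{I}$.

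The gap is in the upper bound. You correctly isolate the crux, $\lambda(\bar{M}/N)=\lambda(R/adj(I))$, but you do not prove it: the assertion that it ``follows from Kodiyalam's description of integrally closed modules \cite{Kdy1995} applied to $\bar{M}$, together with Lipman's formula for $I(\bar{M})=\bar{I}$'' does not hold up. Unwinding lengths, that identity is equivalent to $e(\bar{M})-\lambda(F/\bar{M})=e(\bar{I})-\lambda(R/\bar{I})$, i.e.\ to the length-multiplicity identity for the integrally closed module $\bar{M}$ --- which is a theorem of Kodiyalam--Mohan \cite{KdyMhn2015} proved via the Hoskin--Deligne formula, not something available in \cite{Kdy1995}, and it is precisely one of the results this paper sets out to reprove. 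The paper instead establishes $\lambda(\bar{M}/N)=\lambda(R/adj(I))$ by a genuinely different mechanism: Lemma \ref{keylem} identifies $\lambda(\bar{M}/N)$ with $\lambda(R/I_{n-r-1}(B))$ for the truncated presenting matrix $B$ of $\bar{M}$, and Shibata's theorem (Theorem \ref{adj}, from \cite{Sbt2019}) identifies $I_{n-r-1}(B)$ with $adj(I(\bar{M}))$. Without Shibata's presenting-matrix description of the adjoint (or an explicit appeal to the Hoskin--Deligne-based identity of \cite{KdyMhn2015}), the step you label ``the main obstacle'' remains exactly that, so part (1) of the theorem is not proved. You should also record the reduction to the case $M\subset\fkm F$, since a minimal reduction of a module with a free summand need not be a parameter module.
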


As a consequence, we obtain a formula: 
$$e(M) = \lambda (F/M)+\lambda (R/adj(I))$$ 
for integrally closed modules $M$, which can be viewed as an analogue of the well-known fact: 
$$
e(I )=  \lambda (R/I )+ \lambda (R/adj(I ))
$$
for integrally closed $\fkm$-primary ideals $I $ in $R$. 
As an advantage of this extension, we can readily get 
the following formula - Corollary \ref{kdymhn} - for integrally closed modules:  
$$ e(I)-  e(M) =\lambda (R/I)-  \lambda (F/M)$$
which was discovered 
in \cite{KdyMhn2015}. 
Our proof is quite different from theirs which is based on the Hoskin-Deligne formula. Thus, we give a new approach to this interesting formula. 
Furthermore, we prove the new inequality $e(I)-e(M)\geq \lambda(R/I)-\lambda(F/M)$ for any finitely generated torsion-free $R$-module $M$.

As another advantage of the extension, we prove that for any given integrally closed $\mathfrak m$-primary ideal $I$ of order $r$, 
there is a one to one correspondence - Theorem \ref{1to1} - between the isomorphism classes of integrally closed modules $M$ of rank $r$ with $I(M)=I$ and those of contracted modules $K$ of rank $r$ with $I(K)=I$ and $I_{r-1}(K)=adj(I)$. Here, $I_{r-1}(K )$ denotes the ideal generated by $(r-1) \times (r-1)$-minors 
of a matrix whose columns generate $K $. 

In \S 2, we will fix our notation and recall some basic facts we will use in this note. 
In \S 3, we will prove our main result - Theorem \ref{main} - and give some applications. 
Finally, in \S 4, we make some remarks illustrated with a concrete example.  

%%%%%%%%%%%%%%%%%%%%%
\section{Preliminaries} 

Throughout this note, $(R,\fkm)$ will be a two-dimensional regular local ring with infinite residue field $R/\fkm$, and $M$
will be a non-free, finitely generated, torsion-free $R$-module of rank $r$. Let $F$ be the double $R$-dual $M^{\ast \ast}$ of $M$. Then $F$ is an $R$-free module of rank $r$ containing $M$ canonically with the quotient $F/M$ of finite non-zero length
 - see Proposition 2.1 of \cite{Kdy1995}. 
We regard $M$ as a submodule of $F$ generated by the columns of a suitable matrix. 
To be precise, let $F=RT_1+\dots +RT_r$ with  basis $T_1, \dots , T_r$. Suppose that $M=(f_1, \dots , f_n)$ is generated by $f_1, \dots , f_n$. 
Define the associated matrix, denoted by $\widetilde{M}$, as the $r \times n$ matrix $(a_{ij})$ where $f_j=a_{1j}T_1+\dots +a_{rj}T_r$. 
We then identify $M$ as the submodule of $F$ generated by the columns of $\widetilde{M}$. 
Let $I_k(M)$ denote the ideal generated by the $k \times k$-minors of 
$\widetilde{M}$.
This ideal is the $(r-k)$th Fitting ideal of $F/M$, so
it is independent of the choice of $\widetilde{M}$, thereby justifying the notation. 
Let $I(M)=I_r(M)$ be the ideal generated by maximal minors of $\widetilde{M}$. 
By $\lambda(M )$ and $\mu(M )$, we mean the length and the number of minimal generators of  $M $ respectively. 

First, we recall some basic facts on the Buchsbaum-Rim multiplicities, reductions and the integral closure for modules 
over two-dimensional regular local rings. 
The {\it Buchsbaum-Rim multiplicity} of $M$ 
defined in \cite{BchRim1964} 
is a positive integer
$$e(M)=\lim_{p \to \infty}(r+1)! \frac{  \lambda (\Sym^p_R(F)/M^p)}{p^{r+1}} $$
where $M^p=\mathrm{Im}(\Sym_R^p(M) \to \Sym_R^p(F))$ is the image of the natural homomorphism.
Note that $e(M)$ is independent of a choice of the presenting matrix of $F/M$  - see Theorem 3.3 of \cite{BchRim1964} - thereby justifying the notation. 

Let $\Rees(M)=\mathrm{Im}(\Sym_R(M) \to \Sym_R(F))=\oplus_{p \geq 0} M^p$. 
Then $\Rees(M)$ is a graded subalgebra of the polynomial ring $\Sym_R(F)=R[T_1,\cdots,T_r]$, and  
the homogeneous component of degree $p$ is denoted by $M^p=\mathrm{Im}(\Sym^p_R(M) \to \Sym^p_R(F))$. 
A submodule $N$ of $M$ is said to be a {\it reduction} of $M$
if the ring extension $\Rees(N) \subset \Rees(M)$ is integral or equivalently if 
the equality $M^{p+1}=NM^p$ holds in $\Sym_R^{p+1}(F)$ for some $p \geq 0$. 

A reduction $N$ of $M$ is said to be 
{\it minimal} if $N$ itself has no proper reduction. A minimal reduction $N$ of $M$ 
always exists, and every minimal 
generating set of $N$ can be extended to a minimal generating set of $M$. 
The proof is the same as in the case of ideals - see Theorem 8.3.6 and 8.3.3 of \cite{HnkSwn2006}. 
Similarly, since the residue field $R/\fkm$ is infinite, 
we have the following result of Rees - see Lemma 2.2 of \cite{Res1987}. 

\begin{Proposition} 
Let $M$ be a finitely generated torsion-free $R$-module of rank $r$. Then the inequality
$\mu (N) \geq r+1$ 
holds for any reduction $N$ of $M$. Moreover, equality holds if and only if $N$ is a minimal reduction of $M$. 
\end{Proposition}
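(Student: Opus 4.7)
The plan is to mirror the classical fiber-ring proof for ideals (Theorems 8.3.3 and 8.3.6 of \cite{HnkSwn2006}), adapted to modules via the Rees algebra $\Rees(M) \subseteq R[T_1,\ldots,T_r]$.

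First I would consider the special fiber $\mathcal{F}(M) := \Rees(M)/\fkm\Rees(M)$, a standard graded $k$-algebra with $k := R/\fkm$, and denote its Krull dimension by $\ell(M)$. For any reduction $N \subseteq M$, the relation $M^{p+1}=NM^p$ for $p \gg 0$ makes $\Rees(M)$ a finitely generated $\Rees(N)$-module; reducing modulo $\fkm$, $\mathcal{F}(M)$ becomes a finite module over the image of $\mathcal{F}(N)$, so that image has Krull dimension $\ell(M)$. Since $\mathcal{F}(N)$ is generated in degree one by the images of $\mu(N)$ minimal generators, it is a quotient of $k[Y_1,\ldots,Y_{\mu(N)}]$; thus $\mu(N) \geq \dim \mathcal{F}(N) \geq \ell(M)$.

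Next I would verify that $\ell(M) = r+1$. The positivity of the Buchsbaum-Rim multiplicity forces $\lambda(\Sym^p_R(F)/M^p)$ to be a polynomial of exact degree $r+1$ for $p \gg 0$; translating this through the dimension formula for Rees algebras yields $\dim \Rees(M) = r+2$, and since $F/M$ has positive finite length, the standard drop-by-one passing from $\Rees(M)$ to its special fiber (analogous to the $\fkm$-primary ideal case) gives $\ell(M) = r+1$. Combined with the previous paragraph, this produces the inequality $\mu(N) \geq r+1$.

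For the equality I would argue both directions by Noether normalization. If $\mu(N) = r+1$, then $\mathcal{F}(N)$ is standard graded of dimension $r+1$, generated in degree one by $r+1$ elements, hence isomorphic to $k[Y_1,\ldots,Y_{r+1}]$; any proper reduction $N' \subsetneq N$ would give an $\mathcal{F}(N')$ of the same Krull dimension $r+1$ but generated by fewer degree-one elements, forcing a nontrivial homogeneous relation in this polynomial ring --- impossible. So $N$ is minimal. Conversely, if $N$ is minimal with $\mu(N) \geq r+2$, the images of its minimal generators span a $k$-space of dimension $>\ell(M)$ in $\mathcal{F}(N)_1$; since $k$ is infinite, graded prime avoidance yields a general linear combination allowing one to discard a generator while preserving integrality of $\mathcal{F}(N)$ inside $\mathcal{F}(M)$, and Nakayama's lemma lifts this to a proper subreduction of $N$, contradicting minimality.

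The main obstacle is the identification $\ell(M) = r+1$: relating the degree of the Buchsbaum-Rim Hilbert polynomial to the dimension of the special fiber requires more care in the module setting than in the ideal case, since $\Rees(M)$ is a graded subring of a polynomial ring rather than a classical blow-up. The infinite-residue-field argument trimming a redundant generator is then a routine adaptation of the ideal case.
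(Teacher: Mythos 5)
The paper offers no written proof of this proposition---it simply points to Rees (Lemma 2.2 of \cite{Res1987}) and the ideal-case arguments of Theorems 8.3.3 and 8.3.6 of \cite{HnkSwn2006}---and your fiber-cone/analytic-spread argument is precisely that standard proof, correctly adapted to modules via $\Rees(M)\subseteq\Sym_R(F)$. The sketch is sound, and the one step that genuinely needs care, namely the identification $\ell(M)=r+1$ (in particular the lower bound $\ell(M)\geq r+1$, which does not follow from the ``drop-by-one'' inequality alone but is the substance of Rees's lemma), is exactly the point you flag.
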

 
A submodule $N$ of $F$ is said to be a {\it parameter module} in $F$ if the following three conditions are satisfied: 
(i) $  \lambda (F/N)<\infty$, (ii) $N \subset \fkm F$ and (iii) $\mu(N)=r+1$.
Therefore, for $M$ without free direct summands, any minimal reduction $N$ of $M$ 
is a parameter module in $F$. 
We will need the following result on the Buchsbaum-Rim multiplicity which follows from Proposition 3.8 of \cite{Kdy1995} and Corollary 4.5 of \cite{BchRim1964}. See also Theorem 1.3 of \cite{HysHry2010}.

\begin{Theorem}\label{br-formula}
Let $M$ be a finitely generated torsion-free $R$-module with $F=M^{\ast \ast}$. Then the equalities 
$$e(M)=e(N) =\lambda (F/N)= \lambda (R/I(N))$$ 
hold for any minimal reduction $N$ of $M$.
\end{Theorem}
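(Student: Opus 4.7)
The plan is to chain three equalities: $e(M)=e(N)$, $e(N)=\lambda(F/N)$, and $\lambda(F/N)=\lambda(R/I(N))$. For the first, I would use the general principle that Buchsbaum-Rim multiplicity is preserved under reductions: since $\Rees(N)\subset\Rees(M)$ is a finite integral extension of graded $R$-algebras, a standard comparison of Hilbert polynomials forces the leading coefficients of $p\mapsto\lambda(\Sym^p_R(F)/N^p)$ and $p\mapsto\lambda(\Sym^p_R(F)/M^p)$ to coincide, giving $e(M)=e(N)$.

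For $e(N)=\lambda(F/N)$, the discussion preceding the statement shows that when $M$ has no free direct summand, any minimal reduction $N$ is a parameter module in $F$: one has $\lambda(F/N)<\infty$ (since $N\subset M$ have the same rank $r$), $N\subset\fkm F$, and $\mu(N)=r+1$ by the Rees-type proposition quoted just above. Corollary 4.5 of \cite{BchRim1964} then yields $e(N)=\lambda(F/N)$ for parameter modules.

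For $\lambda(F/N)=\lambda(R/I(N))$, I would invoke Proposition 3.8 of \cite{Kdy1995}. The matrix $\widetilde{N}$ is $r\times(r+1)$ with $I(N)=I_r(N)$ being $\fkm$-primary, so in the two-dimensional regular local setting the Eagon-Northcott-style complex attached to $\widetilde{N}$ is a free resolution of $F/N$, and tracking the alternating sum of lengths along this resolution equates $\lambda(F/N)$ with $\lambda(R/I(N))$. This length identity is the main technical ingredient; were one to reprove the theorem from scratch rather than cite Proposition 3.8, this step — verifying exactness of the complex in the regular two-dimensional setting and carrying out the length bookkeeping — is the one I would expect to require the most work.
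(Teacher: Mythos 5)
Your proposal is correct and matches the paper's approach: the paper offers no proof of its own, simply asserting that the result follows from Proposition 3.8 of \cite{Kdy1995} and Corollary 4.5 of \cite{BchRim1964}, and your chain of three equalities assembles exactly those two cited ingredients together with the standard reduction-invariance of the Buchsbaum--Rim multiplicity. The only cosmetic caveat is that, like the paper's own preliminary discussion, your parameter-module step is phrased for $M$ without free direct summands, but this is consistent with the paper's standing hypotheses in \S 2.
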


The {\it integral closure} $\bar M$ of $M$ 
defined in \cite{Res1987} 
is a submodule of $F$ containing $M$, and it can be 
expressed as $$\bar M=\{ f \in F \mid I(M) \ \text{is a reduction} \text{~of} \  I(M+Rf) \} $$
- see Theorem 3.2 of \cite{Kdy1995}. Therefore, $N$ is a reduction of $M$ if and only if $I(N)$ is a reduction of $I(M)$. 
Since $R$ is a two-dimensional regular local ring,
we have the following useful formula - see Theorem 5.4 of \cite{Kdy1995} - which (when applied to $M=I \oplus J$ with $I,J$ integrally closed) can be viewed as an analogue of the classical Zariski's product theorem.  
\begin{Theorem}\label{kdy}
Let $M$ be a finitely generated torsion-free $R$-module with ideal of minors $I(M)$. 
Then the equality $$\bar{I(M)}=I(\bar M)$$ holds. In particular, the ideal $I(M)$ is integrally closed if $M$ is integrally closed. 
\end{Theorem}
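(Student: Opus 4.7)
My plan is to prove the two inclusions $I(\bar M) \subseteq \overline{I(M)}$ and $\overline{I(M)} \subseteq I(\bar M)$ separately. The first is a formal consequence of the defining property of $\bar M$; the second is the substantive content, in which the two-dimensional regular local ring hypothesis enters in an essential way.

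For $I(\bar M) \subseteq \overline{I(M)}$, note that $\bar M$ is finitely generated because it sits inside the Noetherian module $F$. It therefore suffices to show by induction on the number of generators adjoined to $M$ that, for every finitely generated $N$ with $M \subseteq N \subseteq \bar M$, one has $I(N) \subseteq \overline{I(M)}$. The base case is exactly the defining property recalled in the preliminaries: for $f \in \bar M$, $I(M)$ is a reduction of $I(M+Rf)$, hence $I(M+Rf) \subseteq \overline{I(M)}$. For the inductive step, monotonicity of integral closure gives $\bar M \subseteq \bar N$ whenever $M \subseteq N$, so any further generator $g \in \bar M$ lies in $\bar N$, yielding $I(N+Rg) \subseteq \overline{I(N)} \subseteq \overline{I(M)}$ by applying the defining property to $N$.

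For the harder inclusion, I would first reduce to the case of a parameter module by choosing a minimal reduction $N \subseteq M$, so $\mu(N) = r+1$. Since $N$ is a reduction of $M$, one checks directly from the symmetric-algebra definitions that $I(N)$ is a reduction of $I(M)$, hence $\overline{I(N)} = \overline{I(M)}$; and $\bar N \subseteq \bar M$ gives $I(\bar N) \subseteq I(\bar M)$. It therefore suffices to prove $\overline{I(N)} \subseteq I(\bar N)$ when $N$ is presented by an $r \times (r+1)$ matrix whose $r+1$ maximal minors generate $I(N)$. Given $a \in \overline{I(N)}$, the plan is to construct an explicit $f = \sum_k y_k T_k \in F$ with $f \in \bar N$ and $a \in I(N+Rf)$: the new maximal minors obtained by adjoining the column $f$ are $R$-linear combinations of the coordinates $y_k$ with the $(r-1)\times(r-1)$ minors of the presenting matrix of $N$, and one seeks coordinates $y_k$ by exploiting an integral equation $a^d + c_1 a^{d-1} + \cdots + c_d = 0$ with $c_i \in I(N)^i$ so as to realize $a$ modulo $I(N)$. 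Membership $f \in \bar N$ must then be verified by showing that $I(N)$ is a reduction of $I(N+Rf)$, which can be detected by comparing Buchsbaum--Rim multiplicities via Theorem~\ref{br-formula}. The main obstacle is this explicit realization, which is the heart of Kodiyalam's original argument and draws on the rich structure of integrally closed $\fkm$-primary ideals in dimension two, most notably Zariski's unique factorization of integrally closed ideals into simple ones.
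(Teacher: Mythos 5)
First, note that the paper does not prove this statement at all: it is quoted verbatim from Theorem~5.4 of \cite{Kdy1995}, so there is no in-paper argument to compare against. Judged on its own terms, your proposal establishes only half of the theorem. The inclusion $I(\bar M) \subseteq \overline{I(M)}$ is handled correctly and completely: the induction on adjoined generators, the use of the characterization $\bar M=\{f\in F \mid I(M) \text{ is a reduction of } I(M+Rf)\}$, and the monotonicity/idempotence of integral closure are all in order (and, as you say, this direction needs nothing about dimension two). But for the reverse inclusion $\overline{I(M)} \subseteq I(\bar M)$ you give only a plan whose decisive step --- producing, for each $a\in\overline{I(N)}$, an element $f=\sum_k y_kT_k\in\bar N$ with $a\in I(N+Rf)$ --- is explicitly left open (``the main obstacle is this explicit realization''). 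That is a genuine gap, and it sits exactly where all the content of the theorem lives: verifying $f\in\bar N$ requires showing $I(N)$ is a reduction of $I(N+Rf)$, which is essentially the statement being proved, so the element-by-element construction is circular-looking as described and an integral equation for $a$ over $I(N)$ does not obviously yield coordinates $y_k$ realizing $a$ against the $(r-1)\times(r-1)$ minors.

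For what it is worth, Kodiyalam's actual argument does not proceed by such a pointwise realization. Since the easy direction gives $I(M)\subseteq I(\bar M)\subseteq \overline{I(M)}$, one has $\overline{I(\bar M)}=\overline{I(M)}$, and the hard inclusion reduces to showing that $I(\bar M)$ is \emph{integrally closed}; this is where dimension two enters, via the structure theory of integrally closed modules (Bourbaki-type exact sequences $0\to E\to M\to I\to 0$ exhibiting $I(M)$ as a product of integrally closed ideals, together with Zariski's product theorem). If you want to complete your proof, replacing your construction with this reduction is the viable route; as written, the second inclusion is not proved.
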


Next, we describe the length of $M/N$ for a minimal reduction $N$
of $M$. Assume that $M$ has no free direct summands, equivalently, $M \subset \fkm F$. 
Let $N=(f_1, \dots , f_{r+1})$ be a minimal reduction of $M$. 
Extending the generating set to a minimal generating set of $M$, we can write $M=(f_1, \dots, 
f_{r+1}, \dots , f_n)$ where $n=\mu(M)$. 
Consider a minimal free resolution:
$$  \begin{CD}
     0 @>>> R^{n-r}  @>{A}>>  R^{n}
     @>{[f_1 \cdots f_n]}>>  M  @>>>  0, 
  \end{CD}
$$
where $A$ is a presenting matrix of $M$. 
Let $B$ be the submatrix of $A$ obtained by deleting the
first $(r+1)$ rows.

\begin{Lemma}\label{keylem}
With notation as above, we have $\lambda(M/N)=\lambda(R/I_{n-r-1}(B))$ 
where $I_{n-r-1}(B)$ is the ideal generated by maximal minors of $B$. 
\qed
\end{Lemma}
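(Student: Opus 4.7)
The plan is to interpret $B$ as a presentation matrix of $M/N$, promote it to a free resolution over the two-dimensional regular local ring $R$, then dualize and invoke the Hilbert--Burch theorem to identify $\mathrm{Ext}^2_R(M/N,R)$ with $R/I_{n-r-1}(B)$; local duality will then yield the desired length equality.

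For the first step, I would check that a tuple $(c_{r+2},\dots,c_n)\in R^{n-r-1}$ is a relation on the images $\bar f_{r+2},\dots,\bar f_n$ in $M/N$ if and only if $\sum_{i\geq r+2}c_i f_i\in N$, i.e., if and only if there exist $d_1,\dots,d_{r+1}\in R$ with $(-d_1,\dots,-d_{r+1},c_{r+2},\dots,c_n)\in\mathrm{Im}(A)$; projecting the columns of $A$ onto their last $n-r-1$ coordinates then makes
\[
R^{n-r}\xrightarrow{B}R^{n-r-1}\to M/N\to 0
\]
exact. Since $M/N$ has finite length over a two-dimensional regular local ring, Auslander--Buchsbaum forces $\mathrm{pd}_R(M/N)=2$, so $\ker B$ is free; it has rank $1$ because $M/N$ is torsion. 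Picking a generator $w=(w_1,\dots,w_{n-r})^T$ of $\ker B$ produces a free resolution
\[
0 \to R \xrightarrow{w} R^{n-r} \xrightarrow{B} R^{n-r-1} \to M/N \to 0,
\]
and dualizing against $R$ (noting that $\mathrm{Hom}_R(M/N,R)=0$ since $M/N$ is torsion in a domain) yields a free resolution
\[
0 \to R^{n-r-1} \xrightarrow{B^T} R^{n-r} \xrightarrow{w^T} R \to \mathrm{Ext}^2_R(M/N,R) \to 0,
\]
exhibiting $\mathrm{Ext}^2_R(M/N,R)$ as the cyclic module $R/J$ for $J=(w_1,\dots,w_{n-r})$.

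The core of the argument is then Hilbert--Burch, which supplies a nonzerodivisor $d\in R$ with $J=d\cdot I_{n-r-1}(B)$. The subtle point I expect to be the main obstacle is ruling out $d$ being a non-unit: because $R$ is two-dimensional Gorenstein, local duality gives $\lambda(R/J)=\lambda(\mathrm{Ext}^2_R(M/N,R))=\lambda(M/N)<\infty$, so $J$ is $\fkm$-primary; if $d$ were a non-unit then $J\subseteq(d)$ would lie inside a height-one prime, contradicting $\sqrt{J}=\fkm$. Hence $d$ is a unit, $J=I_{n-r-1}(B)$, and $\lambda(M/N)=\lambda(R/J)=\lambda(R/I_{n-r-1}(B))$.
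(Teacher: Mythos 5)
Your proof is correct. The paper itself does not prove this lemma --- it is stated with a \qed and deferred to the proof of Proposition 4.1 of \cite{KdyMhn2015} (and Lemma 4.7 of \cite{Hys2022}) --- so there is no in-paper argument to match against; what you have written is a complete, self-contained justification, and it is the natural one for a statement of this shape: identify $B$ as a presentation matrix of $M/N$, complete to a length-two free resolution whose last term is free of rank one, dualize, and combine Hilbert--Burch with duality over the Gorenstein ring $R$. Each step checks out: the computation that the relation module of $\bar f_{r+2},\dots,\bar f_n$ is the projection of $\Img(A)$, hence $\Img(B)$, is right; $\mathrm{pd}_R(M/N)=2$ does force $\ker B$ to be free, and the rank count gives rank one; and the argument that $d$ must be a unit because $J$ is $\fkm$-primary is sound. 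Two small points worth making explicit. First, for the dualized complex to be a resolution of $\mathrm{Ext}^2_R(M/N,R)$ you need $\mathrm{Ext}^1_R(M/N,R)=0$ as well as $\mathrm{Hom}_R(M/N,R)=0$; this is automatic since $M/N$ has finite length, so its annihilator has grade $2$, and in any case it is subsumed by the local duality you invoke later ($H^i_{\fkm}(M/N)=0$ for $i>0$). Second, the degenerate case $n=r+1$, i.e.\ $M=N$, should be set aside at the start (then $B$ is empty, $I_0(B)=R$, and both sides are $0$), since the Auslander--Buchsbaum and rank arguments presuppose $M/N\neq 0$. Neither point affects the substance; as an alternative to local duality for the unit argument, one could also note that the Buchsbaum--Eisenbud exactness criterion applied to the resolution $0\to R\xrightarrow{w}R^{n-r}\xrightarrow{B}R^{n-r-1}\to M/N\to 0$ already forces $\mathrm{grade}(J)\geq 2$.
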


This lemma follows from the proof of Proposition 4.1 of \cite{KdyMhn2015}. 
Also, the particular case can be found in 
Lemma 4.7 of \cite{Hys2022}.

\section{Length and multiplicity} %%%%%%%%%%%%%%%%%

In order to prove Theorem \ref{main}, we investigate the following two differences: 
$$e(M)-\lambda(F/M) \ \text{and} \  e(I(M))-e(M). $$ 
First, we give the lower bounds in the following two propositions.   

\begin{Proposition}\label{lower1}
Let $M$ be a finitely generated torsion-free $R$-module of rank $r$ without free direct summands.  
Then $e(M)-\lambda(F/M) \geq \lambda(R/\Fitt_{r+1}(M))$. 
\end{Proposition}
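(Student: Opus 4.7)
The plan is to express the left-hand side as $\lambda(M/N)$ for a minimal reduction $N$ of $M$ and then to apply Lemma \ref{keylem}. Since $M \subset \fkm F$ (as $M$ has no free direct summands) and $R/\fkm$ is infinite, a minimal reduction $N$ of $M$ exists. By Theorem \ref{br-formula}, $e(M) = \lambda(F/N)$. Subtracting $\lambda(F/M)$ and using $N \subset M \subset F$ with both quotients of finite length, I get
$$e(M) - \lambda(F/M) = \lambda(F/N) - \lambda(F/M) = \lambda(M/N).$$

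Next, I would extend a minimal generating set $f_1, \dots, f_{r+1}$ of $N$ to a minimal generating set $f_1, \dots, f_n$ of $M$ with $n = \mu(M)$, and let $A$ be the resulting $n \times (n-r)$ presentation matrix of $M$. With $B$ the submatrix of $A$ obtained by deleting the first $r+1$ rows, Lemma \ref{keylem} yields
$$\lambda(M/N) = \lambda(R/I_{n-r-1}(B)).$$
It then remains to compare $I_{n-r-1}(B)$ with the Fitting ideal. Under the standard convention, $\Fitt_{r+1}(M)$ is generated by the $(n-r-1)\times(n-r-1)$ minors of any presentation matrix of $M$ on $n$ generators, so $\Fitt_{r+1}(M) = I_{n-r-1}(A)$. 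Since $B$ is obtained from $A$ by deleting rows, every $(n-r-1)\times(n-r-1)$ minor of $B$ appears as such a minor of $A$ with the same columns, giving
$$I_{n-r-1}(B) \subseteq I_{n-r-1}(A) = \Fitt_{r+1}(M).$$
Taking lengths reverses the inclusion and produces $\lambda(M/N) \geq \lambda(R/\Fitt_{r+1}(M))$, which is the claim.

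Because the heavy lifting is done by Theorem \ref{br-formula} and Lemma \ref{keylem}, there is no real obstacle; the essential point is the trivial observation that passing to a submatrix can only shrink the ideal of minors of a given size. As a byproduct of the containment above, $\Fitt_{r+1}(M)$ automatically contains the $\fkm$-primary ideal $I_{n-r-1}(B)$ (whose colength is the finite number $\lambda(M/N)$), so the right-hand side of the stated inequality is well-defined.
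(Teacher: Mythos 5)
Your argument is correct and is essentially identical to the paper's proof: both reduce $e(M)-\lambda(F/M)$ to $\lambda(M/N)=\lambda(R/I_{n-r-1}(B))$ via Theorem \ref{br-formula} and Lemma \ref{keylem}, and then use the containment $I_{n-r-1}(B)\subseteq I_{n-r-1}(A)=\Fitt_{r+1}(M)$. The paper leaves the submatrix-minor containment implicit, which you spell out; otherwise the two proofs coincide.
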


\begin{proof}
Let $N$ be a minimal reduction of $M$. Extending a minimal generating set of $N$
to a set of minimal generators of $M$, we get a presenting matrix $A$ of $M$ and its
submatrix $B$ as in Lemma \ref{keylem}. Then we have 
\begin{align*}
e(M)-\lambda(F/M) &=\lambda(F/N)-\lambda(F/M) \quad \text{by Theorem $\ref{br-formula}$}\\
&=\lambda(M/N)\\
&=\lambda(R/I_{n-r-1}(B)) \quad \text{by Lemma $\ref{keylem}$}\\
&\geq \lambda(R/I_{n-r-1}(A))\\
&=\lambda(R/\Fitt_{r+1}(M))
\end{align*}
as desired. 
\end{proof}

\begin{Proposition}\label{lower2}
Let $M$ be a finitely generated torsion-free $R$-module of rank $r$ with 
$I=I(M)$. 
Then $e(I)-e(M) \geq \lambda(R/I_{r-1}(M))$. 
\end{Proposition}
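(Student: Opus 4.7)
The plan is to reduce to a minimal reduction of $M$ and then apply Lemma \ref{keylem} not to a module but to the ideal $I(N)$ viewed as a rank-$1$ $R$-module. Let $N$ be a minimal reduction of $M$; by Theorem \ref{br-formula}, $e(M)=e(N)=\lambda(R/I(N))$, and since $N$ is a reduction of $M$, the ideal $I(N)$ is a reduction of $I=I(M)$ (as recorded in the preliminaries), so $e(I)=e(I(N))$. Hence
\[
e(I)-e(M)=e(I(N))-\lambda(R/I(N)).
\]

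Since $F/N$ has finite length over the two-dimensional regular local ring $R$ and $\mu(N)=r+1$, it admits a minimal free resolution
\[
0\to R\xrightarrow{v} R^{r+1}\xrightarrow{\widetilde N} F\to F/N\to 0,
\]
in which, by Cramer's rule (or Hilbert-Burch), $v=(v_1,\ldots,v_{r+1})^{T}$ is the column of signed $r\times r$ minors of $\widetilde N$. Dualising and using $\mathrm{Ext}^i(F/N,R)=0$ for $i<2$ together with the length computation $\lambda(\mathrm{Ext}^2(F/N,R))=\lambda(F/N)=\lambda(R/I(N))$, one sees that $(v_1,\ldots,v_{r+1})=I(N)$, that the sequence
\[
0\to R^r\xrightarrow{\widetilde N^{T}} R^{r+1}\xrightarrow{(v_1,\ldots,v_{r+1})} R\to R/I(N)\to 0
\]
is the minimal free resolution of $R/I(N)$, and in particular $\mu(I(N))=r+1$.

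Now choose a minimal reduction $(\alpha,\beta)$ of the $\fkm$-primary ideal $I(N)$ that extends, after reindexing the $v_i$, to a minimal generating set $\{\alpha,\beta,v_3,\ldots,v_{r+1}\}$ of $I(N)$; this is possible generically since $R/\fkm$ is infinite. Theorem \ref{br-formula} applied to $I(N)$ gives $e(I(N))=\lambda(R/(\alpha,\beta))$, whence $e(I)-e(M)=\lambda(I(N)/(\alpha,\beta))$. Applying Lemma \ref{keylem} to the rank-$1$ module $I(N)$ with this minimal reduction yields
\[
\lambda(I(N)/(\alpha,\beta))=\lambda(R/I_{r-1}(B)),
\]
where $B$ is the $(r-1)\times r$ submatrix obtained by deleting the first two rows of the new minimal presentation matrix $A'$ of $I(N)$. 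Because the change of generators from $(v_1,\ldots,v_{r+1})$ to $(\alpha,\beta,v_3,\ldots,v_{r+1})$ modifies $\widetilde N^{T}$ only by adding $R$-multiples of its first two rows to the remaining ones, multilinearity of determinants gives $I_{r-1}(B)\subseteq I_{r-1}(\widetilde N^{T})=I_{r-1}(N)$; and $I_{r-1}(N)\subseteq I_{r-1}(M)$ since $\widetilde N$ is a submatrix of $\widetilde M$. Combining,
\[
e(I)-e(M)=\lambda(R/I_{r-1}(B))\ge\lambda(R/I_{r-1}(N))\ge\lambda(R/I_{r-1}(M)).
\]
The main obstacle will be establishing the Hilbert-Burch minimal free resolution of $R/I(N)$ cleanly (in particular confirming $\mu(I(N))=r+1$, which is what allows Lemma \ref{keylem} to be applied to $I(N)$); once that is done, the remainder is an elementary row-operation bookkeeping argument that extracts the desired inequality from the ideal-level application of Lemma \ref{keylem}.
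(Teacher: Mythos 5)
Your proof is correct, and its overall shape matches the paper's: both pass to a minimal reduction $N$ of $M$, rewrite $e(I)-e(M)$ as the length of $I(N)$ modulo a two-generated minimal reduction, apply Lemma \ref{keylem} to the rank-one module $I(N)$, and finish by comparing $I_{r-1}(B)$ with $I_{r-1}(N)\subseteq I_{r-1}(M)$. The genuine difference lies in how the two-generated minimal reduction of $I(N)$ is arranged. The paper invokes Proposition \ref{rk3prop} (Proposition 6 of \cite{HysKdy2023}) to choose a special $N$ for which the first two signed maximal minors $a_1,a_2$ of $\widetilde{N}$ already form a minimal reduction of $I(N)$; then the matrix $B$ of Lemma \ref{keylem} is literally a submatrix of $\widetilde{N}^T$ and the containment $I_{r-1}(B)\subseteq I_{r-1}(\widetilde{N}^T)=I_{r-1}(N)$ is immediate. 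You instead take an arbitrary minimal reduction $N$ and a general minimal reduction $(\alpha,\beta)$ of $I(N)$, change generators, and observe that the new presentation matrix is $U^{-1}\widetilde{N}^T$ for an invertible $U$, so that $B$ is obtained from the last $r-1$ rows of $\widetilde{N}^T$ by adding $R$-combinations of the first two rows; multilinearity of minors (equivalently, Cauchy--Binet applied to $B=X\widetilde{N}^T$) then still yields $I_{r-1}(B)\subseteq I_{r-1}(\widetilde{N}^T)=I_{r-1}(N)$. What this buys you is independence from the external Proposition \ref{rk3prop}, at the cost of spelling out the Hilbert--Burch duality for $I(N)$ (which the paper compresses into the phrase ``$\widetilde{N}^T$ resolves $I(N)$''); both steps of your argument (the exactness of $0\to R^r\to R^{r+1}\to R\to R/I(N)\to 0$ and the invertibility of $U$ after reindexing, which is exactly the condition that the top $2\times 2$ block of the coefficient matrix of $(\alpha,\beta)$ be invertible) check out. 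The one point to tidy is that you should, as the paper does, first reduce to the case that $M$ has no free direct summands: otherwise the displayed resolution of $F/N$ is not minimal and $\mu(I(N))=r+1$ can fail, though the reduction is the usual one-line splitting-off of the free part.
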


In the course of the proof we need to appeal to the following 
 - see Proposition 6 of \cite{HysKdy2023}. 

\begin{Proposition}\label{rk3prop}
For a non-free, finitely-generated torsion-free $R$-module $M$ of rank $r$, there exists
a minimal reduction $N$ of $M$ such that if the transpose 
$\widetilde{N}^T$ of a matrix $\widetilde{N}$ whose columns generate $N$
resolves 
the ideal $I(N)=(a_1, a_2, \dots , a_{r+1})$, then $(a_1, a_2)$ is a minimal reduction of $I(N)$.
\qed 
\end{Proposition}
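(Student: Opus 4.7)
The plan is to start from any minimal reduction $N_0$ of $M$ and then, by an invertible change of its generating set---equivalently, right-multiplication of the presenting matrix by some $Q\in GL_{r+1}(R)$---arrange that the first two maximal minors of the new matrix become a prescribed $2$-generated minimal reduction of $I(N_0)$. Since the assertion is insensitive to splitting off free direct summands, I may assume $M\subset\mathfrak{m}F$; then any minimal reduction $N_0$ of $M$ automatically lies in $\mathfrak{m}F$.

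Pick such an $N_0$ with $r\times(r+1)$ presenting matrix $\widetilde{N_0}$. Because $N_0$ has rank $r$, the kernel of $\widetilde{N_0}:R^{r+1}\to F$ is a rank-$1$ free $R$-module generated by the vector $v=(v_1,\dots,v_{r+1})^T$ whose components are, up to sign, the maximal $r\times r$ minors of $\widetilde{N_0}$; these generate $I(N_0)$, and minimally so since all entries of $\widetilde{N_0}$ lie in $\mathfrak{m}$. Since $I(N_0)$ is $\mathfrak{m}$-primary in the $2$-dimensional Cohen-Macaulay ring $R$ with infinite residue field, the classical theory of minimal reductions produces a $2$-generated minimal reduction of $I(N_0)$ realized as sufficiently general linear combinations $b_1=\sum_i p_{1i}v_i$, $b_2=\sum_i p_{2i}v_i$ of the minimal generators $v_i$. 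Minimality forces $b_1,b_2$ to be linearly independent modulo $\mathfrak{m}I(N_0)$, so the two rows $(p_{1i})$ and $(p_{2i})$ are linearly independent modulo $\mathfrak{m}$ and can be extended to a basis of $(R/\mathfrak{m})^{r+1}$; a suitable lift gives $P\in GL_{r+1}(R)$ whose first two rows are $(p_{1i})$ and $(p_{2i})$.

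Set $Q:=P^{-1}$ and $\widetilde{N}:=\widetilde{N_0}Q$. As $Q$ is invertible, the columns of $\widetilde{N}$ generate the same submodule $N:=N_0$, which is therefore still a minimal reduction of $M$. One checks directly that $\ker(\widetilde{N})=Q^{-1}(\ker\widetilde{N_0})=R\cdot Pv$, so the vector of (signed) maximal minors of $\widetilde{N}$ is a unit multiple of $Pv$. Consequently, if $I(N)=(a_1,\dots,a_{r+1})$ in the natural order, then $a_1=ub_1$ and $a_2=ub_2$ for some common unit $u\in R^\times$, and hence $(a_1,a_2)$ is a minimal reduction of $I(N)=I(N_0)$, as required.

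The main technical point is the step yielding $(b_1,b_2)$: one needs simultaneously (i) the existence of a $2$-generated minimal reduction of $I(N_0)$ realized as $R$-linear combinations of the minimal generators $v_i$, and (ii) the resulting $2\times(r+1)$ coefficient matrix to extend to an element of $GL_{r+1}(R)$. Item (i) is a standard consequence of general reduction theory for $\mathfrak{m}$-primary ideals in $2$-dimensional Cohen-Macaulay local rings with infinite residue field, and (ii) then follows automatically since (i) forces the coefficient rows to be linearly independent modulo $\mathfrak{m}$.
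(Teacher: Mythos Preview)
Your argument is correct. The paper does not supply its own proof of this proposition; it simply cites Proposition~6 of \cite{HysKdy2023} and marks the statement with a \qed. So there is no in-paper proof to compare against, but your approach is sound and self-contained.

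A few remarks confirming the key steps. Once $M\subset\mathfrak{m}F$, any minimal reduction $N_0$ lies in $\mathfrak{m}F$, so the $r\times(r+1)$ matrix $\widetilde{N_0}$ has all entries in $\mathfrak{m}$; hence $\widetilde{N_0}^T$ gives a \emph{minimal} free resolution of $I(N_0)$ and the signed maximal minors $v_1,\dots,v_{r+1}$ are a minimal generating set. The fact that a minimal reduction $(b_1,b_2)$ of $I(N_0)$ can be taken as $R$-linear combinations of the $v_i$ with coefficient rows independent modulo $\mathfrak{m}$ is exactly the statement that a minimal generating set of a minimal reduction extends to one of the ambient ideal (Theorem~8.3.3 of \cite{HnkSwn2006}, also recalled in \S2 of the paper). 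Your kernel computation $\ker(\widetilde{N_0}Q)=R\cdot Q^{-1}v=R\cdot Pv$ is correct, and since $I(N)$ is $\mathfrak{m}$-primary the signed minors of $\widetilde{N}$ have unit gcd, so they agree with $Pv$ up to a global unit; this yields $(a_1,a_2)=(ub_1,ub_2)$ as claimed. The reduction to $M\subset\mathfrak{m}F$ also goes through: writing $M=M'\oplus R^s$ and taking $\widetilde{N}$ block-diagonal with an identity block, the kernel is $(\ker\widetilde{N'})\oplus 0$, so the last $s$ signed minors vanish and the first two are those of $\widetilde{N'}$.
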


\begin{proof}[Proof of Proposition $\ref{lower2}$]
We may assume that $M$ has no free direct summands. 
Choose a minimal reduction $N$ and a matrix $\tilde{N}$ whose columns generate $N$ as in Proposition \ref{rk3prop}, i.e., 
if 
$\widetilde{N}^T$ 
resolves the ideal $I(N)=(a_1, a_2, \dots , a_{r+1})$, 
then $(a_1, a_2)$ is a minimal reduction of $I(N)$. 
Note that $(a_1, a_2)$ is also a minimal reduction of $I$ since $I(N)$ is a reduction of $I$.
We will apply Lemma \ref{keylem} when $M=I(N)$ and $N=(a_1, a_2)$. 
Then the matrix 
$\widetilde{N}^T$ 
can be chosen as the presenting matrix $A$ as in Lemma \ref{keylem}, and 
$B$ is the submatrix of 
$\widetilde{N}^T$ 
obtained by deleting the first two rows. 

When this is the case, by Lemma \ref{keylem}, $\lambda(I(N)/(a_1, a_2))=
\lambda(R/I_{r-1}(B))$. 
Therefore, 
\begin{align*}
    e(I)-e(M) &=\lambda(R/(a_1, a_2))-\lambda(R/I(N)) \quad \text{by Theorem $\ref{br-formula}$} \\
    &=\lambda(I(N)/(a_1, a_2))\\
    &=\lambda(R/I_{r-1}(B))\\
    &\geq   \lambda (R/I_{r-1}(N)) \\
    &\geq   \lambda (R/I_{r-1}(M))
\end{align*}
Thus, we have the desired inequality. 
\end{proof}

Next, we give the upper bounds of the differences. 
For this, we will need to recall a few facts on adjoint ideals in two-dimensional regular local rings. 

The {\it adjoint} of an ideal $I$ in $R$ is defined by Lipman in \cite{Lpm1994} as  
$$adj(I)=\bigcap_{V} \{ a \in K \mid a J_{V/R} \subset IV \}, $$
where $K$ is the quotient field of $R$ and 
the intersection is taken over all divisorial valuation rings $V$ with respect to $R$, and 
$J_{V/R}$ denotes the Jacobian ideal of $V$ over $R$. 
Then $adj(I)$ is an integrally closed ideal in $R$ and satisfies 
\begin{equation}\label{adjprop}
I \subset \bar{I } \subset adj(I )=adj(\bar{I })
\end{equation} 
- see Lemma 18.1.2 of \cite{HnkSwn2006} for instance. 

Huneke and Swanson also proved in \cite{HnkSwn2006} that for an integrally closed $\fkm$-primary ideal $I$ in a two-dimensional regular local ring $R$, the adjoint $adj(I)$ can be obtained from the presenting matrix $A$ of $I$. 
To state the result precisely, let $I =(a_1, a_2, \dots , a_n )$ be an integrally closed $\fkm$-primary ideal with $n=\mu(I )$.
Let $A$ be a presenting matrix of $I$ in the following exact sequence: 
\begin{equation}\label{pres}
  \begin{CD}
     0 @>>> R^{n-1}  @>{A}>>  R^{n}
     @>{[a_1 \cdots a_n]}>>  I  @>>>  0.
  \end{CD}
\end{equation}
Then $I=I_{n-1}(A)$ the ideal of maximal minors of $A$ by the Hilbert-Burch theorem. Theorem 18.5.1 of \cite{HnkSwn2006} is the following.

\begin{Theorem}
Let $I =(a_1, a_2, \dots , a_n )$ be an integrally closed $\fkm$-primary ideal with $n=\mu(I )$.
Let $A$ be a presenting matrix of $I$ in the above exact sequence (\ref{pres}). 
Then we have 
$$adj(I )=I_{n-2}(A). $$
Moreover, if the first two generators $a_1, a_2$ form a minimal reduction
of $I $, then  
$$adj(I )=I_{n-2}(B)$$ 
where $B$ is the submatrix of $A$ obtained by deleting the first two rows. \qed
\end{Theorem}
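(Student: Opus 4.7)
The plan is to prove both equalities simultaneously by a sandwiching argument. We have the trivial inclusion $I_{n-2}(B) \subseteq I_{n-2}(A)$ (every maximal minor of $B$ is in particular a minor of $A$), so if we can establish both the containment $I_{n-2}(A) \subseteq adj(I)$ and the length equality $\lambda(R/I_{n-2}(B)) = \lambda(R/adj(I))$, then all three ideals must coincide, proving the main statement and the ``moreover'' part at once.

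For the length equality, I would argue as follows. Since $(a_1, a_2)$ is a minimal reduction of $I$, Theorem \ref{br-formula} applied to the rank-one torsion-free module $M = I$ gives $e(I) = \lambda(R/(a_1, a_2))$. Combining this with the classical identity $e(I) = \lambda(R/I) + \lambda(R/adj(I))$ recalled in the introduction yields
$$\lambda(R/adj(I)) = \lambda(R/(a_1,a_2)) - \lambda(R/I) = \lambda(I/(a_1,a_2)).$$
Applying Lemma \ref{keylem} to $M = I$ (rank $r = 1$) with minimal reduction $N = (a_1, a_2)$, so that $n - r - 1 = n - 2$, rewrites the right-hand side as $\lambda(R/I_{n-2}(B))$. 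This settles the length computation without any further work.

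The main obstacle is the containment $I_{n-2}(A) \subseteq adj(I)$. Unwinding the definition of $adj(I)$, this amounts to showing that for every divisorial valuation $v$ of the quotient field $K$ centered on $\mathfrak{m}$, with valuation ring $V$, each $(n-2)$-minor $\delta$ of $A$ satisfies $v(\delta) + v(J_{V/R}) \geq v(IV)$. My approach would be induction on the number of base points of $I$ (equivalently, on the order of $I$), exploiting the behavior of adjoints under first quadratic transforms: if $T = R[\mathfrak{m}/x]_{\mathfrak{n}}$ is the first quadratic transform of $R$ along $v$ and $I^T = x^{-r}IT$ denotes the transform of $I$, with $r$ its order, then $adj(I)T$ is related to $adj(I^T)$ by Lipman's quadratic transform formula. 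In parallel, the Hilbert-Burch presentation of $I$ passes in a controlled way to one of $I^T$, and the Fitting ideal $I_{n-2}(A)$ transforms compatibly. The base case $I = \mathfrak{m}$ is immediate, since then $n = 2$ and both sides equal $R$. The delicate point will be to choose the quadratic transform so that the rows removed to form $B$ interact predictably with the transform, which is where Proposition \ref{rk3prop} (guaranteeing a good choice of $a_1, a_2$) will likely play a role.

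Once $I_{n-2}(A) \subseteq adj(I)$ is established, the chain $I_{n-2}(B) \subseteq I_{n-2}(A) \subseteq adj(I)$ together with the equality of colengths $\lambda(R/I_{n-2}(B)) = \lambda(R/adj(I))$ forces $I_{n-2}(B) = I_{n-2}(A) = adj(I)$, giving both claims at once. In short, the length side of the argument is free from the tools already available in the excerpt; the genuine work lies in the valuative containment, and this is where I would concentrate the inductive machinery.
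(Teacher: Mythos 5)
The paper itself offers no proof of this theorem: it is quoted from Huneke--Swanson (Theorem 18.5.1 of \cite{HnkSwn2006}) with a \verb|\qed|, so there is nothing internal to compare against. Judged on its own terms, your proposal is half a proof. The length half is correct and efficient: $e(I)=\lambda(R/(a_1,a_2))$ for a minimal reduction, Lipman's formula $e(I)=\lambda(R/I)+\lambda(R/adj(I))$ for integrally closed $I$, and Lemma \ref{keylem} with $M=I$, $r=1$ together give $\lambda(R/I_{n-2}(B))=\lambda(I/(a_1,a_2))=\lambda(R/adj(I))$, and the sandwich logic $I_{n-2}(B)\subseteq I_{n-2}(A)\subseteq adj(I)$ plus equal colengths would indeed force all three ideals to coincide. (Two small points: the unrestricted statement $adj(I)=I_{n-2}(A)$ for an arbitrary minimal presentation then needs the observation that $I_{n-2}(A)=\Fitt_1(I)$ is independent of the presentation, so that one may assume $a_1,a_2$ is a minimal reduction; and one should say why such a reduction exists among a minimal generating set, which the infinite residue field guarantees.)

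The genuine gap is the containment $I_{n-2}(A)\subseteq adj(I)$, which you correctly identify as ``where the genuine work lies'' and then do not carry out. The induction on base points is only a plan: you would need (i) a precise statement of how a Hilbert--Burch presentation of $I$ induces one of the transform $I^T=x^{-r}IT$, including the fact that $\mu(I^T)$ can drop so the matrix sizes change; (ii) a verification that the $(n-2)$-minors pick up exactly the power $x^{r-1}$ needed to match Lipman's transformation rule for adjoints (recall $ord(adj(I))=ord(I)-1$, so $adj(I)$ and $I$ transform with different powers of $x$); and (iii) the valuative bookkeeping at the finitely many quadratic transforms that dominate a given divisorial valuation. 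None of this is routine, and the appeal to Proposition \ref{rk3prop} is a red herring here --- that proposition concerns choosing a minimal reduction of a \emph{module} whose transposed matrix resolves $I(N)$, and has no evident bearing on compatibility of generators of $I$ with a quadratic transform. A shorter route to the missing containment, consistent with the tools already in play, is linkage: since $I$ is perfect of grade two, the standard mapping-cone computation of the link gives $(a_1,a_2):I=I_{n-2}(B)$, and Lipman's results give $adj(I)\subseteq (a_1,a_2):I$; combined with your colength equality this closes the argument without any induction on base points. As written, however, the proposal does not constitute a proof.
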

\color{black}

This can be extended to integrally closed modules. 
Shibata proved in \cite{Sbt2019} that for an integrally closed module $M$ over a two-dimensional regular local ring $R$, 
the adjoint $adj(I(M))$ of the ideal of minors $I(M)$ can be obtained from the presenting matrix of $M$ - see Theorem 3.1 of \cite{Sbt2019}. 

\begin{Theorem}\label{adj}
Let $M=( f_1, \dots , f_n )$ be a non-free, integrally closed $R$-module of rank $r$ with 
$n=\mu(M)$. Suppose that the first $r+1$ generators form 
a minimal reduction $N=(f_1, \dots , f_{r+1} )$ of $M$. 
Let $A$ be a presenting matrix of $M$ in the following exact sequence: 
\begin{equation*}
  \begin{CD}
     0 @>>> R^{n-r}  @>{A}>>  R^{n}
     @>{[f_1 \cdots f_n]}>> M  @>>>  0.
  \end{CD}
\end{equation*}
Let $B$ be the submatrix of $A$ obtained by deleting the first $(r+1)$ rows as in Lemma \ref{keylem}.  Then we have the equalities
\begin{equation*} %\label{adj}
    adj(I(M))=I_{n-r-1}(A)=I_{n-r-1}(B). \qed
\end{equation*}
\end{Theorem}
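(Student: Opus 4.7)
The plan is to reduce the computation of $adj(I(M))$ to the Huneke--Swanson formula (Theorem 18.5.1 of \cite{HnkSwn2006}) quoted just above, by passing from the module $M$ to the ideal $I := I(M)$. Since $M$ is integrally closed, Theorem \ref{kdy} gives that $I$ is integrally closed. Because $N$ is a reduction of $M$, the ideal $I(N)$ is a reduction of $I$, so $\overline{I(N)} = I$ and property (\ref{adjprop}) yields $adj(I(N)) = adj(I)$. By Proposition \ref{rk3prop} one may choose $N$ so that $\widetilde{N}^T$ resolves $I(N) = (a_1, \ldots, a_{r+1})$ with $(a_1, a_2)$ a minimal reduction of $I(N)$, and hence also of $I$.

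The next step is to construct a minimal free presentation of $I$ in which the first two generators are $a_1$ and $a_2$, using the data of $A$ and $\widetilde{N}$. Two sources of syzygies are in play: the Hilbert--Burch resolution of $I(N)$ given by $\widetilde{N}^T$, which accounts for the redundancy among $a_1, \ldots, a_{r+1}$; and the syzygy identity $\widetilde{M} A = 0$, which after splitting $A = \begin{pmatrix} A_1 \\ B \end{pmatrix}$ with $A_1$ the first $r+1$ rows reads $\widetilde{N} A_1 + \widetilde{M}_2 B = 0$, where $\widetilde{M}_2$ consists of the last $n-r-1$ columns of $\widetilde{M}$. This identity encodes how the extra generators $f_{r+2}, \ldots, f_n$ of $M$ beyond the reduction $N$ contribute new minimal generators to $I$, and how the columns of $B$ provide their additional syzygies. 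Granting the formula $\mu(I(M)) = n - r + 1$ for integrally closed $M$ of rank $r$ (a consequence of the structure theory in \cite{Kdy1995, KdyMhn2015}), a minimal presentation $C$ of $I$ would have shape $(n-r+1) \times (n-r)$, so the submatrix $C''$ obtained by deleting the first two rows of $C$ would have the same shape $(n-r-1) \times (n-r)$ as $B$.

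The core technical step is to identify $C''$ with $B$ up to invertible row and column operations; Theorem 18.5.1 of \cite{HnkSwn2006} then gives $adj(I) = I_{n-r-1}(C'') = I_{n-r-1}(B)$. For the remaining equality $I_{n-r-1}(A) = I_{n-r-1}(B)$, I would use the same syzygy identity: since the kernel of the surjection $R^{r+1} \twoheadrightarrow I(N)$ is free of rank one and generated by the signed maximal minors of $\widetilde{N}$, each column of $A_1$ lies, modulo that rank-one kernel, in the image of the columns of $B$ under $\widetilde{M}_2$; Laplace-expanding an arbitrary $(n-r-1)$-minor of $A$ along the top $r+1$ rows then yields an $R$-linear combination of $(n-r-1)$-minors of $B$, giving the nontrivial inclusion $I_{n-r-1}(A) \subseteq I_{n-r-1}(B)$.

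The main obstacle I anticipate is the rigorous identification of $C''$ with $B$: one must produce a concrete minimal presentation of the integrally closed ideal $I$ out of the raw data $A$ and $\widetilde{N}$, verify its minimality (this is where the generator-count formula $\mu(I) = n - r + 1$ enters decisively), and track signs and invertible transformations carefully so that the Fitting ideal equality survives. The integrally closed hypothesis on $M$ is essential throughout: without it, the matrix shapes would not match and only strict inclusions would be available in place of the claimed equalities.
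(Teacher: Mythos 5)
The paper itself offers no proof of Theorem \ref{adj}: it is quoted (with a \textit{qed} mark) from Theorem 3.1 of \cite{Sbt2019}, with Proposition 2.5 of \cite{Mhn1997} cited for the second equality. So the comparison can only be with your argument on its own terms, and there it has a genuine gap. Your strategy --- pass to the integrally closed ideal $I=I(M)$ and invoke the Huneke--Swanson description $adj(I)=I_{n-r}(C)=I_{n-r}(C'')$ for a minimal presentation $C$ of $I$ --- is a reasonable plan, but the step you yourself label ``the core technical step,'' namely producing a minimal presentation of $I$ whose first two generators are $a_1,a_2$ and whose bottom $(n-r-1)\times(n-r)$ block is equivalent to $B$, is never carried out, and it is precisely the content of the theorem. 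The identity $\widetilde{N}A_1+\widetilde{M}_2B=0$ does not by itself show that the columns of $B$ account for all syzygies of a minimal generating set of $I$, nor even that one can extract from $\widetilde{M}$ a minimal generating set of $I$ of the required form; this is exactly where integral closedness (via contractedness and $\mu(I)=\mathrm{ord}(I)+1=n-r+1$) must do real work, and the proposal only gestures at it. As it stands, nothing beyond the trivial inclusion $I_{n-r-1}(B)\subseteq I_{n-r-1}(A)$ is actually established.

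Two further problems. First, your argument for $I_{n-r-1}(A)\subseteq I_{n-r-1}(B)$ rests on a false statement: the kernel of the surjection $R^{r+1}\twoheadrightarrow I(N)$ is free of rank $r$ (it is the image of $\widetilde{N}^{T}$, by Hilbert--Burch), not of rank one; the rank-one module generated by the vector of signed maximal minors of $\widetilde{N}$ is the kernel of $\widetilde{N}\colon R^{r+1}\to F$. Even after this correction the deduction does not parse, since the columns of $\widetilde{M}_2B$ lie in $F=R^{r}$ while those of $A_1$ lie in $R^{r+1}$, so one cannot compare them ``modulo that kernel'' without first producing a lift --- which is again the missing construction. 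Second, you replace the given minimal reduction $N$ by the special one supplied by Proposition \ref{rk3prop}. Since $I_{n-r-1}(A)=\Fitt_{r+1}(M)$ is independent of the presentation, this is harmless for the equality $adj(I)=I_{n-r-1}(A)$, but the equality $adj(I)=I_{n-r-1}(B)$ is asserted for an \emph{arbitrary} minimal reduction occupying the first $r+1$ slots (and is used in that generality in the proof of Theorem \ref{upper1}), so restricting to a special $N$ loses part of the statement.
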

See also Proposition 2.5 of \cite{Mhn1997} for the second equality. 

\medskip

We now give the upper bound on $e(M)-\lambda(F/M)$. 

\begin{Theorem}\label{upper1}
Let $M$ be a finitely generated torsion-free $R$-module of rank $r$ with $I=I(M)$. 
Then $e(M)- \lambda (F/M) \leq 
\lambda (R/adj(I))$. Moreover, equality holds if and only if 
$M$ is integrally closed. 
\end{Theorem}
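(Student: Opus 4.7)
The strategy is to reduce to the integrally closed case $M=\bar M$, where Shibata's Theorem \ref{adj} provides a matrix description of $adj(I)$ that is exactly what the computation in the proof of Proposition \ref{lower1} produces.

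First, I may assume $M$ has no free direct summands, as any free summand contributes zero to both sides of the inequality. The two key observations about integral closure are: any minimal reduction $N$ of $M$ is also a minimal reduction of $\bar M$, since $N\subseteq M\subseteq \bar M$, $N$ is a reduction of $\bar M$ by transitivity of reductions, and $\mu(N)=r+1$ is the minimum for any reduction of $\bar M$ by the Rees-type result recalled above. Therefore Theorem \ref{br-formula} gives $e(M)=e(N)=e(\bar M)$. Moreover, trivially $\lambda(F/M)\geq \lambda(F/\bar M)$, with equality if and only if $M=\bar M$.

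The heart of the proof is the equality
$$e(\bar M)-\lambda(F/\bar M)=\lambda(R/adj(I)).$$
To establish it, I run the computation in the proof of Proposition \ref{lower1} with $\bar M$ in place of $M$: choose a minimal reduction of $\bar M$ and extend to a minimal generating set; form the presenting matrix $A$ of $\bar M$ and its submatrix $B$ obtained by deleting the first $r+1$ rows. Writing $\bar n=\mu(\bar M)$, Lemma \ref{keylem} combined with Theorem \ref{br-formula} yields
$$e(\bar M)-\lambda(F/\bar M)=\lambda(\bar M/N)=\lambda(R/I_{\bar n-r-1}(B)).$$
Since $\bar M$ is integrally closed, Theorem \ref{adj} identifies $I_{\bar n-r-1}(B)$ with $adj(I(\bar M))$. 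Combining Theorem \ref{kdy} with (\ref{adjprop}) gives $adj(I(\bar M))=adj(\bar I)=adj(I)$, finishing the equality.

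Putting everything together,
$$e(M)-\lambda(F/M)=e(\bar M)-\lambda(F/M)\leq e(\bar M)-\lambda(F/\bar M)=\lambda(R/adj(I)),$$
with the middle inequality being an equality precisely when $\lambda(F/M)=\lambda(F/\bar M)$, i.e.\ when $M$ is integrally closed. The main conceptual input is Shibata's Theorem \ref{adj}; without it, identifying $I_{\bar n-r-1}(B)$ with $adj(I)$ would be the genuine obstacle. Once that ingredient is in hand, the argument is a clean parallel between $M$ and $\bar M$ via their shared minimal reductions.
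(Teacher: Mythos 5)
Your proof is correct and follows essentially the same route as the paper: reduce to $\bar M$ via a shared minimal reduction, compute $e(\bar M)-\lambda(F/\bar M)=\lambda(\bar M/N)=\lambda(R/I_{\bar n-r-1}(B))$ using Theorem \ref{br-formula} and Lemma \ref{keylem}, and identify this with $\lambda(R/adj(I))$ via Theorem \ref{adj}, Theorem \ref{kdy}, and (\ref{adjprop}). Your explicit justification that a minimal reduction of $M$ is also a minimal reduction of $\bar M$ is a point the paper leaves implicit, but the argument is the same.
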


\begin{proof}
We may assume that $M$ has no free direct summands. 
Take a minimal reduction $N$ of $M$ and  
extend the minimal generating set of $N$ to one of the integral closure $\bar M$.  
Choosing a presenting matrix $A$ of $\bar M$ and its submatrix $B$ 
as in Lemma \ref{keylem},  
we have 
\begin{align*}
    e(M)-\lambda(F/M) &\leq e(\bar M)-\lambda(F/\bar M) \\
    &= \lambda(F/N)-\lambda(F/\bar M) \quad \text{by Theorem $\ref{br-formula}$}\\
    &= \lambda (\bar M/N) \\
    &=  \lambda (R/I_{n-r-1}(B)) \quad \text{by Lemma $\ref{keylem}$}\\
    &=  \lambda (R/adj(I(\bar M))) \quad \text{by Theorem $\ref{adj}$} \\
    &=  \lambda (R/adj(\bar{I})) \quad \text{by Theorem $\ref{kdy}$} \\ 
    &=  \lambda (R/adj(I)) \quad \text{by $(\ref{adjprop})$}. 
\end{align*}
It is clear that  equality holds if and only if $\lambda(F/M)=\lambda(F/\bar M)$ if and only if
$M=\bar M$. 
\end{proof}

As a consequence, we get a formula: 
$$e(M)-\lambda(F/M)=\lambda(R/adj(I))$$
for any integrally closed $R$-module $M$ with $I=I(M)$. 
This can be viewed as a natural extension of the well-known formula:  
$$e(I )-\lambda(R/I )=\lambda(R/adj(I ))$$
for any integrally closed $\fkm$-primary ideal $I $ in $R$ - see Proposition 3.3 of \cite{Lpm1994}. 
Thanks to this extension, 
we can readily get the following interesting formula 
\begin{equation}\label{lenmultid}
e(I)-e(M)=\lambda(R/I)-\lambda(F/M)
\end{equation}
proved in Corollary 4.3 of \cite{KdyMhn2015}. 
We will refer to this in the sequel as the {\it length-multiplicity identity}. 
In fact, we further prove the following: 

\begin{Corollary}\label{kdymhn}
Let $M$ be a finitely generated torsion-free $R$-module of rank $r$ with $I=I(M)$. Then $e(I)-e(M)\geq \lambda(R/I)-\lambda(F/M)$ holds. Moreover, equality holds if $M$ integrally closed. 
\end{Corollary}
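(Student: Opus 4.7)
The plan is to establish the two assertions in turn.

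\emph{Equality when $M$ is integrally closed.} By Theorem~\ref{kdy}, $I = I(M)$ is then integrally closed as well. The equality case of Theorem~\ref{upper1} gives $e(M) = \lambda(F/M) + \lambda(R/adj(I))$, while Lipman's formula applied to the integrally closed $\fkm$-primary ideal $I$ gives $e(I) = \lambda(R/I) + \lambda(R/adj(I))$. Subtracting these two identities yields $e(I) - e(M) = \lambda(R/I) - \lambda(F/M)$.

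\emph{The general inequality.} My strategy is to apply the equality just proved to the integrally closed module $\bar M$, using the invariance of multiplicity under integral closure ($e(M) = e(\bar M)$ and $e(I) = e(\bar I)$, both of which are immediate from Theorem~\ref{br-formula} since any minimal reduction of $M$ is a minimal reduction of $\bar M$, and similarly for ideals). This yields
$$
e(I) - e(M) \,=\, e(\bar I) - e(\bar M) \,=\, \lambda(R/\bar I) - \lambda(F/\bar M).
$$
Writing $\lambda(R/\bar I) = \lambda(R/I) - \lambda(\bar I/I)$ and $\lambda(F/\bar M) = \lambda(F/M) - \lambda(\bar M/M)$, the right-hand side becomes $\bigl(\lambda(R/I) - \lambda(F/M)\bigr) + \bigl(\lambda(\bar M/M) - \lambda(\bar I/I)\bigr)$. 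Hence the desired inequality reduces to the key estimate
$$
\lambda(\bar M/M) \geq \lambda(\bar I/I).
$$

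\emph{The main obstacle.} Proving this key estimate is the heart of the matter. My plan is to choose a filtration $M = M_0 \subsetneq M_1 \subsetneq \cdots \subsetneq M_k = \bar M$ by length-one extensions $M_{i+1} = M_i + R f_{i+1}$ with $\fkm f_{i+1} \subseteq M_i$ (so $k = \lambda(\bar M/M)$), and to verify the per-step bound $\lambda(I(M_{i+1})/I(M_i)) \leq 1$; summing then yields $\lambda(\bar I/I) \leq k$. At each step the quotient $I(M_{i+1})/I(M_i)$ is automatically an $R/\fkm$-vector space, since any new maximal minor of the form $\omega \wedge f_{i+1}$ with $\omega \in \wedge^{r-1} M_i$ is annihilated by $\fkm$ by multilinearity: for $a \in \fkm$, $a(\omega \wedge f_{i+1}) = \omega \wedge (a f_{i+1}) \in \omega \wedge M_i \subseteq I(M_i)$. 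The content of the per-step bound is that this $R/\fkm$-vector space is cyclic; this is where I expect the technical core of the argument to lie, and it should exploit the relation $I(M_{i+1}) \subseteq \overline{I(M_i)}$ forced by $f_{i+1} \in \overline{M_i}$, perhaps together with the Hilbert--Burch structure used in Theorem~\ref{adj}.
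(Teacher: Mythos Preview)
Your treatment of the equality case is correct and matches the paper's argument exactly.

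For the general inequality, your reduction to the estimate $\lambda(\bar M/M) \geq \lambda(\bar I/I)$ is valid, but note that this estimate is literally equivalent to the inequality you are trying to prove (just undo your algebra), so the reduction by itself is tautological. All the content therefore sits in the per-step bound you leave open --- and that bound is \emph{false}. Take $J = (x^2, y^3)$ and $M_0 = J \oplus J \subset R^2$. Then $\bar J = (x^2, xy^2, y^3)$, the element $f = (xy^2, 0)$ lies in $\overline{M_0} \setminus M_0$ with $\fkm f \subset M_0$, and $I(M_0) = J^2 = (x^4, x^2y^3, y^6)$ while $I(M_0 + Rf) = J^2 + (x^3y^2, xy^5)$. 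The monomials $x^3y^2$ and $xy^5$ are $k$-linearly independent modulo $J^2$, so $\lambda\bigl(I(M_0+Rf)/I(M_0)\bigr) = 2$ although $\lambda\bigl((M_0+Rf)/M_0\bigr) = 1$. In fact every length-one extension of $M_0$ inside $\overline{M_0}$ (these are parametrised by $\mathbb P^1(k)$) produces the same jump of $2$ in the ideal of minors, so no choice of filtration rescues the argument. The hoped-for cyclicity simply does not hold.

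The paper proves the inequality by an entirely different mechanism. Using Proposition~\ref{rk3prop} it selects a minimal reduction $N$ so that, via Lemma~\ref{keylem} applied to $I(N)$ and its two-generated reduction, one obtains $e(I) - e(M) = \lambda(R/I_{r-1}(B))$ for an explicit $(r-1)\times r$ submatrix $B$ of $\widetilde N^{T}$. It then constructs a surjection $\phi'\colon F/M \twoheadrightarrow (I_{r-1}(B)+I)/I$ by sending the $j$th basis vector of $F$ to the signed minor $(-1)^{j-1}\det B_j$; Laplace expansion along a row shows $M \subset \ker\phi$. This yields $\lambda(F/M) \geq \lambda(R/I) - \lambda(R/(I_{r-1}(B)+I))$, from which the desired inequality follows at once.
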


\begin{proof}
First, we prove the equality $e(I)-e(M)=\lambda(R/I)-\lambda(F/M)$ when $M$ is integrally closed.
Note that, since $M$ is integrally closed, $I=I(M)$ is also integrally closed by Theorem $\ref{kdy}$. 
Applying Theorem \ref{upper1} to both $M$ and $I$, it follows that 
$e(M)-  \lambda (F/M) =  \lambda (R/adj(I))=
e(I)-  \lambda (R/I). $

Next, we prove the general inequality $e(I)-e(M) \geq \lambda(R/I)-\lambda(F/M)$. 
We first reduce to the case that $M \subset \mathfrak{m}F$. 
Write $M=M'\oplus G$ where $G$ is free and $M'$ has no free direct summand. Then the double $R$-dual $F=F'\oplus G$ which contains $M=M' \oplus G$ canonically. Since $F/M \cong F'/M'$, it follows that $I(M)=I(M')$, $e(M)=e(M')$, and $\lambda(F/M)=\lambda(F'/M')$. Hence, we assume that $M \subset \mathfrak{m} F$. 

Choose a minimal reduction $N$ and a matrix $\tilde{N}$ whose columns generate $N$ as in Proposition \ref{rk3prop}, i.e., if
$\widetilde{N}^T$ 
resolves $I(N)=(a_1,a_2,\dots , a_{r+1})$, then $(a_1, a_2)$ is a minimal reduction of $I(N)$. 
As in the proof of Proposition \ref{lower2}, applying Lemma \ref{keylem} when $M=I(N)$ and $N=(a_1, a_2)$, 
we have $e(I)-e(M)=\lambda(R/I_{r-1}(B))$ where $B$ is the submatrix of 
$\widetilde{N}^T$ 
obtained by deleting its first two rows. 
Let $B_j$ be the submatrix of $B$ obtained by deleting its $j$th column. Consider the following $R$-linear map  
\begin{equation*}
\begin{CD}
\phi: F @>>> \frac{I_{r-1}(B)+I(M)}{I(M)}
\end{CD}
\end{equation*}
defined by taking the basis vector $e_j$ of $F$ to the image of $\Delta_j:=(-1)^{j-1} det B_j$. 

\medskip

\noindent
{\bf{Claim:}} $M \subset \mathrm{Ker} \phi$. 

\medskip

Take any element $v=[v_1 \cdots v_r]^T \in M$ - regarded as an element of $F=R^r$. 
Form the matrix (say $C$) by concatenating the row vector $v^T$ with $B$. 
By definition of $I(M)$, $det (C) \in I(M)$. However, we can calculate $det (C)$ by expanding along the $v^T$ row. 
This gives $det (C) =v_1\Delta_1+\dots +v_r\Delta_r$. Thus, $\phi(v)=0$. 

We now have a surjective $R$-linear map 
\begin{equation*}
\begin{CD}
\phi': \frac{F}{M} @>>> \frac{I_{r-1}(B)+I(M)}{I(M)}. 
\end{CD}
\end{equation*}
This gives $\lambda(F/M) \geq \lambda(\frac{I_{r-1}(B)+I(M)}{I(M)})=\lambda(\frac{R}{I(M)})-\lambda(\frac{R}{I_{r-1}(B)+I(M)})$, 
and therefore we have 
$e(I)-e(M)=\lambda(\frac{R}{I_{r-1}(B)})
\geq \lambda(\frac{R}{I_{r-1}(B)+I(M)})
\geq \lambda(\frac{R}{I(M)})-\lambda(F/M). 
$
\end{proof}

\begin{Remark} 
There is a class of torsion-free modules $M$ that are not integrally closed such that the equality $e(I)-e(M)=\lambda(R/I)-\lambda(F/M)$ holds - see Example \ref{ex}. Thus the converse of Corollary \ref{kdymhn} does not hold. 
\end{Remark}

We next give the upper bound on $e(I(M))-e(M)$.

\begin{Corollary}\label{upper2}
Let $M$ be a finitely generated torsion-free $R$-module with $I=I(M)$. 
Then $e(I)-e(M) \leq \lambda(R/adj(I))$. Moreover, 
equality holds if and only if $I$ is integrally closed with 
$\lambda (R/I)=e(M)$.
\end{Corollary}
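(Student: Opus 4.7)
The plan is to decompose
$$e(I) - e(M) \;=\; \bigl(e(I) - \lambda(R/I)\bigr) + \bigl(\lambda(R/I) - e(M)\bigr)$$
and bound each of the two parenthesised quantities separately, the first by $\lambda(R/adj(I))$ and the second by $0$. The statement then drops out by addition, and the equality assertion follows by tracking the equality cases of the two individual bounds.

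For the first piece, I would apply Theorem~\ref{upper1} to the ideal $I$ itself, viewed as a rank~$1$ torsion-free $R$-module. Since $F/M$ has finite length, $I = I(M) = \Fitt_0(F/M)$ is $\fkm$-primary; its double $R$-dual is $R$, and its ideal of maximal minors is $I$ itself. Theorem~\ref{upper1} therefore gives
$$e(I) - \lambda(R/I) \;\leq\; \lambda(R/adj(I)),$$
with equality if and only if $I$ is integrally closed.

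For the second piece, I would take a minimal reduction $N$ of $M$. By Theorem~\ref{br-formula} we have $e(M) = \lambda(R/I(N))$, and since $N \subset M$ forces $I(N) \subset I(M) = I$, this yields $e(M) \geq \lambda(R/I)$, with equality if and only if $\lambda(R/I(N)) = \lambda(R/I)$, equivalently $e(M) = \lambda(R/I)$. Adding the two bounds produces $e(I) - e(M) \leq \lambda(R/adj(I))$, and equality in the corollary is equivalent to simultaneous equality in both of the intermediate inequalities, i.e.\ to $I$ being integrally closed \emph{and} $e(M) = \lambda(R/I)$, which is the condition stated.

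I do not anticipate a genuine obstacle here: the statement is essentially a formal consequence of Theorem~\ref{upper1} applied to the rank~$1$ module $I$, combined with the elementary bound $e(M) \geq \lambda(R/I)$ supplied by Theorem~\ref{br-formula}. The only point that merits a brief check is that Theorem~\ref{upper1} is indeed legitimately applicable to $I$ as a rank~$1$ module, which comes down to confirming that $I$ is $\fkm$-primary and that the ``ideal of maximal minors'' in that setting is just $I$---both immediate from the hypotheses on $M$.
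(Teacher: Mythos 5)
Your proof is correct, but it follows a genuinely different route from the paper's. You split $e(I)-e(M)$ as $\bigl(e(I)-\lambda(R/I)\bigr)+\bigl(\lambda(R/I)-e(M)\bigr)$, bound the first term by applying Theorem~\ref{upper1} to $I$ itself as a rank-one module (which is legitimate: $I=I(M)=\Fitt_0(F/M)$ is $\fkm$-primary and non-principal, $I^{\ast\ast}=R$, and $I(I)=I$; indeed the paper makes exactly this application of Theorem~\ref{upper1} to $I$ inside the proof of Corollary~\ref{kdymhn}), and bound the second term by $0$ via $e(M)=\lambda(R/I(N))\geq\lambda(R/I)$ from Theorem~\ref{br-formula} and $I(N)\subset I$. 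Since both summands are bounded in the same direction, the equality case is the conjunction of the two individual equality cases, which recovers the stated characterisation. The paper instead passes to the integral closure: it writes $e(I)-e(M)=e(\bar I)-\lambda(R/I(N))\leq e(\bar I)-\lambda(R/\bar I)$ using $I(N)\subset\bar I$, then identifies $e(\bar I)-\lambda(R/\bar I)$ with $\lambda(R/adj(I))$ by chaining Theorem~\ref{kdy}, Corollary~\ref{kdymhn}, Theorem~\ref{upper1} applied to $\bar M$, and $adj(\bar I)=adj(I)$; its equality condition emerges as $\bar I=I(N)$. Your decomposition is shorter and more self-contained, needing neither $\bar M$, nor Theorem~\ref{kdy}, nor Corollary~\ref{kdymhn}, nor the stability of the adjoint under integral closure; the paper's detour keeps the argument phrased uniformly at the level of modules and their closures, and packages the equality case in the single condition $\bar I=I(N)$. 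The only cosmetic omission on your side is the routine reduction to $M$ without free direct summands (or the observation that it is harmless), which the paper records explicitly.
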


\begin{proof}
We may assume that $M$ has no free direct summands. 
Let $N$ be a minimal reduction of $M$.  Then
\begin{align*}
    e(I)-e(M) &= e(\bar{I})-  \lambda (R/I(N)) \quad \text{by Theorem \ref{br-formula} } \\
    &\leq e(\bar I)-  \lambda (R/\bar{I}) \quad \text{since $I(N) \subset \bar{I}$}\\
    &= e(I(\bar M))-  \lambda (R/I(\bar M)) \quad \text{by Theorem $\ref{kdy}$}\\
    &= e(\bar M)-  \lambda (F/\bar M) \quad \text{by Corollary \ref{kdymhn}}\\
    &= \lambda (R/adj(I(\bar M)) \quad \text{by Theorem \ref{upper1}}\\
    &=\lambda(R/adj(\bar I)) \quad \text{by Theorem $\ref{kdy}$} \\
    &=\lambda(R/adj(I)) \quad \text{by $(\ref{adjprop})$}.  
\end{align*}
The equality holds if and only if $\bar{I}=I(N)$ if and only if 
$I$ is integrally closed with $\lambda(R/I)=\lambda(R/I(N))=e(M)$. 
\end{proof}

\begin{proof}[Proof of Theorem \ref{main}] This is an immediate consequence of Theorem \ref{upper1} and Corollary~\ref{upper2}. 
\end{proof}

Next  we note that there is a large class of modules $K$ satisfying the equality $e(I(K))-e(K)=\lambda(R/adj(I(K)))$ in 
Corollary \ref{upper2}. 

\begin{Corollary}\label{eqex}
Let $M$ be an integrally closed module of rank $r$ with a minimal free resolution: 
\begin{equation*}
  \begin{CD}
     0 @>>> R^{n-r}  @>{\widetilde{K}^T }>>  R^{n}
     @>>> M  @>>>  0.  
  \end{CD}
\end{equation*}
Let $K$ be the image of the map defined by the transpose of the presenting matrix $\widetilde{K}^T$.  
Then the module $K$ satisfies the equality:  
$$e(K)=e(I(K))-\lambda(R/adj(I(K))). $$
\end{Corollary}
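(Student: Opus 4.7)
The plan is to sandwich $e(I(K)) - e(K)$ between the lower bound from Proposition \ref{lower2} and the upper bound from Corollary \ref{upper2}, both applied to the module $K$, and argue that the bounds agree. Observe that $K$ has rank $n-r$, is contained in its double dual $R^{n-r}$, and is generated by the columns of the $(n-r) \times n$ matrix $\widetilde{K}$. Thus
\begin{equation*}
\lambda\bigl(R/I_{n-r-1}(K)\bigr) \leq e(I(K)) - e(K) \leq \lambda(R/adj(I(K))).
\end{equation*}

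The first key step identifies the lower-bound ideal with $adj(I(M))$. Since $I_{n-r-1}(K) = I_{n-r-1}(\widetilde{K}) = I_{n-r-1}(\widetilde{K}^T)$, and after reordering the generators of $M$ so the first $r+1$ form a minimal reduction (only a relabeling of rows of $\widetilde{K}^T$), Theorem \ref{adj} applied to the integrally closed $M$ gives $adj(I(M)) = I_{n-r-1}(\widetilde{K}^T)$. Hence $I_{n-r-1}(K) = adj(I(M))$.

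The second key step aims to show $I(K) = I(M)$, yielding $adj(I(K)) = adj(I(M))$ so that the two bounds match. One route uses Fitting ideals: $I(K) = I_{n-r}(\widetilde{K}) = \Fitt_0(R^{n-r}/K)$, and dualizing the minimal free resolution of $M$ identifies $R^{n-r}/K = \operatorname{coker}(\widetilde{K}) \cong \operatorname{Ext}^1_R(M, R)$. Because $F/M$ has finite length and $R$ is a two-dimensional Gorenstein local ring, local duality combined with the long exact sequence from $0 \to M \to F \to F/M \to 0$ identifies $\operatorname{Ext}^1_R(M, R) \cong (F/M)^\vee$; since the $0$-th Fitting ideal is invariant under Matlis duality for finite-length modules over a Gorenstein local ring, $I(K) = \Fitt_0(F/M) = I(M)$. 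An equivalent and more elementary alternative is the complementary-minor identity $\det((\widetilde{K}^T)_T) = \pm \det(\widetilde{M}_{T^c})$ for each $T \subset \{1,\dots,n\}$ of size $n-r$, which follows from $\widetilde{M} \cdot \widetilde{K}^T = 0$ and exactness of the resolution, and directly implies $I_{n-r}(\widetilde{K}^T) = I_r(\widetilde{M})$.

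Once both identifications are in hand, the sandwich collapses to $\lambda(R/adj(I(K))) \leq e(I(K)) - e(K) \leq \lambda(R/adj(I(K)))$, forcing equality and hence the desired formula $e(K) = e(I(K)) - \lambda(R/adj(I(K)))$. The principal obstacle is the identification $I(K) = I(M)$: although intuitively natural, both the Matlis duality route (invoking a general fact about Fitting ideals not developed elsewhere in the paper) and the complementary-minor route (requiring a Plücker-type identity for the two-term complex) need some justification to be made rigorous.
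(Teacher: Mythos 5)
Your proposal follows essentially the same route as the paper: sandwich $e(I(K))-e(K)$ between the lower bound of Proposition \ref{lower2} and the upper bound of Corollary \ref{upper2}, then make the two bounds coincide via the identifications $I_{n-r-1}(K)=adj(I)$ (Theorem \ref{adj}) and $I(K)=I(M)$. The identification $I(K)=I(M)$ that you flag as the principal obstacle is exactly the point the paper disposes of by citing the proof of Proposition 2.2 of \cite{Kdy1995}, where it is established essentially by your complementary-minor argument, so your proof is correct and matches the paper's.
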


\begin{proof}
Let $n=\mu(M)$ and $I=I(M)$. 
Note that the rank of $K$ is $n-r$. 
Then $I(K)=I_{n-r}(K)=I$ and $I_{n-r-1}(K)=adj(I)$. 
The first equalities follow from the proof of Proposition 2.2 of \cite{Kdy1995}, 
and the second one follows from Theorem \ref{adj}. 
By Proposition \ref{lower2}, 
$$e(I)-e(K) \geq \lambda(R/I_{n-r-1}(K))=\lambda(R/adj(I)). $$
The converse $e(I)-e(K) \leq \lambda(R/adj(I))$ also holds by Corollary \ref{upper2}. 
Thus, we have the equality $e(K)=e(I)-\lambda(R/adj(I))$. 
\end{proof}

Our next result is another consequence of Theorem \ref{upper1}, 
which is a result towards the classification of integrally closed modules. 
To state the result, we recall the fact that for any finitely generated torsion-free $R$-module $M$, 
the inequality $$\mu(M) \leq ord(I(M))+rk(M)$$ always holds, and the equality $\mu(M) = ord(I(M))+rk(M)$ holds if and only if $M$ is contracted - see Propositions 2.2 and 2.5 of \cite{Kdy1995}. In particular, integrally closed modules satisfy the equality $\mu(M) = ord(I(M))+rk(M)$ - see Proposition~4.3 of \cite{Kdy1995}. 

For a given integrally closed $\mathfrak m$-primary ideal $I$ with  $ord(I)=r$, we consider 
two collections of torsion-free $R$-modules $\mathcal M_I$ and $\mathcal K_I$ associated to $I$.
Define
\begin{align*}
\mathcal M_I&=\{M \mid M \ \text{is integrally closed of rank $r$,} \ I(M)=I, \ M \subset \fkm M^{\ast \ast}\},\\
\mathcal K_I&=\{K \mid K \ \text{is contracted of rank $r$}, I(K)=I, I_{r-1}(K)=adj(I), \ K \subset \fkm K^{\ast \ast} \}.
\end{align*}
Here we set $I_0(K)=R$.
Then we have the following theorem. 

\begin{Theorem}\label{1to1}
Let $I$ be an integrally closed $\mathfrak m$-primary ideal with $ord(I)=r$. Then 
there is a bijection
\begin{equation*}
  \begin{CD}
     \psi: \mathcal M_I/{\cong} @>>> \mathcal K_I/{\cong}
  \end{CD}
\end{equation*}
between the isomorphism classes of elements of $\mathcal M_I$ and  those of $\mathcal K_I$.
\end{Theorem}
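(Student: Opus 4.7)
The plan is to realize the bijection by the same ``transpose-of-presentation-matrix'' construction used in Corollary \ref{eqex}, applied in both directions. For $M\in\mathcal{M}_I$, since $M$ is integrally closed with $ord(I)=r$, one has $\mu(M)=ord(I)+rk(M)=2r$, so I fix a minimal free resolution $0\to R^r\xrightarrow{A_M} R^{2r}\to M\to 0$ with entries of $A_M$ in $\fkm$, and define $\psi(M):=\Img(A_M^T\colon R^{2r}\to R^r)\subset R^r$. Changing minimal generators of $M$ or the basis of $R^r$ multiplies $A_M$ on either side by invertible matrices, so $\psi(M)$ is altered only by an automorphism of $R^r$, making $\psi$ well-defined on isomorphism classes. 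The same formula applied to a minimal resolution of $K\in\mathcal{K}_I$ (where $\mu(K)=ord(I)+rk(K)=2r$ since $K$ is contracted) defines the candidate inverse.

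To verify $\psi(M)\in\mathcal{K}_I$, I dualize the resolution. Because $F/M$ has finite length and $R$ is a two-dimensional regular local ring, $\mathrm{Ext}^i_R(F/M,R)=0$ for $i<2$, yielding the exact sequence
$$
0\to R^r\xrightarrow{\tilde{M}^T} R^{2r}\xrightarrow{A_M^T} R^r\to \mathrm{Ext}^2_R(F/M,R)\to 0.
$$
Since $M\subset\fkm F$, the entries of $\tilde{M}^T$ lie in $\fkm$, so $\ker A_M^T=\Img \tilde{M}^T\subset\fkm R^{2r}$; this forces $\mu(\psi(M))=2r$ and $\psi(M)\subset\fkm R^r$. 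The module $\psi(M)$ has rank $r$ with double dual $R^r$, and Corollary \ref{eqex} (via Theorem \ref{adj}) provides $I(\psi(M))=I$ and $I_{r-1}(\psi(M))=adj(I)$. Since $\mu(\psi(M))=2r=ord(I)+r$, $\psi(M)$ is contracted, so $\psi(M)\in\mathcal{K}_I$.

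The main obstacle is to show that $\psi(K)\in\mathcal{M}_I$ for $K\in\mathcal{K}_I$, i.e., that $M_0:=\psi(K)$ is integrally closed; its rank, $I(M_0)=I$, and $M_0\subset\fkm R^r$ follow by the symmetric dualization argument. The key observation is that this same dualization identifies $\tilde{K}^T$ as the (minimal) presentation matrix of $M_0$, so
$$
\Fitt_{r+1}(M_0)=I_{r-1}(\tilde{K}^T)=I_{r-1}(\tilde{K})=I_{r-1}(K)=adj(I),
$$
the last equality by $K\in\mathcal{K}_I$. Proposition \ref{lower1} and Theorem \ref{upper1} then sandwich
$$
\lambda(R/adj(I))\le e(M_0)-\lambda(F/M_0)\le\lambda(R/adj(I(M_0)))=\lambda(R/adj(I)),
$$
forcing equality in the upper bound, whence $M_0$ is integrally closed by the equality-characterization in Theorem \ref{upper1}. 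Finally, the two constructions are mutually inverse: for $M\in\mathcal{M}_I$ with $K=\psi(M)$, the dualized resolution above exhibits $\tilde{M}^T$ as the minimal presentation of $K$, so $\psi(K)=\Img((\tilde{M}^T)^T)=\Img(\tilde{M})=M$, and the symmetric identity on $\mathcal{K}_I$ completes the bijection $\psi\colon\mathcal{M}_I/{\cong}\to\mathcal{K}_I/{\cong}$.
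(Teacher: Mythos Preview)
Your proof is correct and follows essentially the same approach as the paper: both define $\psi$ and its inverse via the transpose-of-presentation-matrix construction, verify membership in $\mathcal{K}_I$ using the facts $I(K)=I$ and $I_{r-1}(K)=adj(I)$ (from the proof of Corollary~\ref{eqex} and Theorem~\ref{adj}), and establish integral closedness of $\psi'(K)$ by sandwiching $e(M_0)-\lambda(F/M_0)$ between the bounds of Proposition~\ref{lower1} and Theorem~\ref{upper1}. Your explicit dualization via $\mathrm{Ext}^2_R(F/M,R)$ to justify $\mu(\psi(M))=2r$ and the mutual-inverse property is a welcome elaboration of what the paper leaves as ``easy to see'' and ``clear by construction.''
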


\begin{proof}
Let $M \in \mathcal M_I$. Then $\mu(M)=ord(I(M))+rk(M)=2r$ since $M$ is integrally closed with $I(M)=I$. 
We consider a matrix $\widetilde{K}$ such that its transpose $\widetilde{K}^T$ is a minimal presenting matrix of $M$ and let $K$ be the module generated by its columns. Thus we have an exact sequence,   
\begin{equation*}
  \begin{CD}
   0 @>>> R^r @>{ \widetilde{K}^T }>> R^{2r} @>>> M @>>> 0. 
  \end{CD}
\end{equation*}
Then $K$ is of rank $r$ with $K \subset \fkm K^{\ast \ast}$, and we have $I(K)=I(M)=I$ as in the proof of Corollary \ref{eqex}. 
Since $\mu(K)=2r=ord(I(K))+rk(K)$, $K$ is contracted. By Theorem \ref{adj}, $I_{r-1}(K)=adj(I)$. Hence, $K \in \mathcal K_I$. It is easy to see that the isomorphism class of $K$ depends only on that of $M$. 
Let 
\begin{equation*}
  \begin{CD}
     \psi: \mathcal M_I/{\cong} @>>> \mathcal K_I/{\cong}
  \end{CD}
\end{equation*}
be the map defined by 
$\psi([M])=[K]$.

Similarly, for any $K \in \mathcal K_I$, consider a matrix $\widetilde{M}$ such that its transpose $\widetilde{M}^T$ is a minimal presenting matrix of $K$ and let $M$ be the module generated by its columns, so that we have an exact sequence, 
\begin{equation*}
  \begin{CD}
   0 @>>> R^r @>{\widetilde{M}^T}>> R^{2r} @>>> K @>>> 0. 
  \end{CD}
\end{equation*}
Then $M$ is of rank $r$ with $M \subset \fkm M^{\ast \ast}$ and $I(M)=I(K)=I$. Moreover, we claim that 
the module $M$ is integrally closed. 
Indeed, by Proposition \ref{lower1} and Theorem \ref{upper1}, we have the following inequalities: 
\begin{align*}
&e(M)-\lambda(M^{\ast \ast}/M) \geq \lambda(R/I_{r-1}(K)) \\
& e(M)-\lambda(M^{\ast \ast}/M) \leq \lambda(R/adj(I)).  
\end{align*}
Since $I_{r-1}(K)=adj(I)$, we have the equality 
$$e(M)-\lambda(M^{\ast \ast}/M)=\lambda(R/adj(I)). $$ 
Hence, by Theorem \ref{upper1}, $M$ is integrally closed and therefore, $M \in \mathcal M_I$.  
Again the isomorphism class of $M$ depends only on that of 
$K$ and we have a map
\begin{equation*}
  \begin{CD}
     \psi': \mathcal K_I/{\cong} @>>> \mathcal M_I/{\cong} 
  \end{CD}
\end{equation*}
defined by 
$\psi'([K])=[M]$. 

It is clear by construction that $\psi \circ \psi'=id$ and $\psi' \circ \psi=id$, and hence define mutually inverse bijections. \end{proof}

%%%%%%%%%%%%%%%%%%%%%%%%
\section{Remarks} 

Let $I $ and $J $ be $\fkm$-primary ideals in $R$. 
Then for large integers $p$ and $q$, the length function
$\lambda (R/I ^pJ ^q)$ can be written
in the form:
\begin{equation}\label{hilb}
\lambda (R/I ^pJ ^q)=e(I )\binom{p}{2}+
e_1(I | J )pq+e(J )\binom{q}{2}+(\text{lower terms})
\end{equation}
for some positive integer $e_1(I |J )$ called 
the {\it mixed multiplicity} of $I $ and $J $. 
If $I $ and $J $ are integrally closed, then the equality 
\begin{equation}\label{mixed}
    e_1(I |J )=  \lambda (R/I J )-  \lambda (R/I )-  \lambda (R/J ) 
\end{equation}
holds - see Corollary 3.7 of \cite{Lpm1988}. We call it the {\it mixed multiplicity formula}.  
The length-multiplicity identity 
 (\ref{lenmultid}) 
for integrally closed modules can 
be viewed as an analogue of 
this formula. 

Let $I $ and $J $ be  $\fkm$-primary ideals, and let 
$M=I \oplus J $. 
Then, since $I(M)=I J $, considering the function (\ref{hilb}) with $p=q$, we have
$$e(I(M))=e(I )+2e_1(I |J )+e(J ). $$
On the other hand, the Buchsbaum-Rim multiplicity $e(M)$ can be expressed as 
$$e(M)=e(I )+e_1(I |J )+e(J ) $$
by a result of Kirby and Rees - see Proposition 4.1 of \cite{KrbRes1996}. 
Hence, the difference $e(I(M))-e(M)$ is just the mixed multiplicity $e_1(I | J)$ in this case. 
Therefore, if we further assume that $I$ and $J$ are integrally closed or equivalently if $M=I \oplus J$ is integrally closed, then
\begin{align*}
e_1(I | J )&=e(I(M))-e(M) \\
&=\lambda(R/I(M))-\lambda(F/M) \quad \text{by Corollary $\ref{kdymhn}$}\\
&=\lambda(R/I J )-\lambda(R/I )-\lambda(R/J ). 
\end{align*}
Thus, the mixed multiplicity formula (\ref{mixed}) follows from the length-multiplicity identity
 (\ref{lenmultid}) 
in a special case. 

In general, it is known that
for any $\fkm$-primary (not necessarily integrally closed) ideals $I $ and $J $, 
the inequality
\begin{equation}\label{mixedineq}
e_1(I |J ) \geq  \lambda (R/I J )-  \lambda (R/I )-  \lambda (R/J ) 
\end{equation}
holds true, and equality holds if and only if there exists $a \in I $ and $b \in J $ such that 
$I J =aJ +bI $ - see results and the arguments of \cite{Vrm1990}. 
The inequality in Corollary \ref{kdymhn} can be viewed as a generalisation of (\ref{mixedineq}). 

We conclude with an example illustrating our results 
obtained in Corollary \ref{kdymhn}. 

\begin{Example}\label{ex}{\rm 
Let $x, y$ be a regular system of parameters for $R$, and let $a, b, c \geq 1$ be integers 
with $1 \leq a \leq c < b \leq a+c$. Consider a submodule $M(a,b,c)$ of $F=R^2$ 
generated by columns of the following matrix: 
$$
\begin{bmatrix}
y^a&x^b&0&x^cy^c\\
x^a&0&y^b&0
\end{bmatrix}. 
$$
Let $M=M(a, b, c)$ and $I=I(M)$. Then the following inequality 
$$e(I)-\lambda(R/I) \geq e(M)-\lambda(F/M)$$
holds true. Moreover, the equality $e(I)-\lambda(R/I) = e(M)-\lambda(F/M)$ holds if and only if $a=b-c$. 
}
\end{Example}

\begin{proof}
Let $J=(x^{a+b}, y^{a+b})$. Then $J$ is a minimal reduction of $I$ since $J \subset I \subset \bar{J}$. 
Let $N$ be a submodule of $M$ generated by the first $3$ generators of $M$:
$$
\begin{bmatrix}
y^a&x^b&0\\
x^a&0&y^b
\end{bmatrix}. 
$$
Since 
$I(N)=(x^{a+b}, x^by^b, y^{a+b})$ is a reduction of $I$, $N$ is a minimal reduction of $M$. 
Hence, $e(I)=\lambda(R/J)=(a+b)^2$ and $\lambda(R/I)=(a+b)^2-a^2-(b-c)^2$. Thus, 
$$e(I)-\lambda(R/I)=a^2+(b-c)^2. $$
A presenting matrix of $M$ in Lemma \ref{keylem} is 
$$A=\begin{bmatrix}
0&x^cy^b \\
-y^c&0\\
0&-x^{a+c}\\
x^{b-c}&-y^{a+b-c}
\end{bmatrix}, 
$$
and the submatrix $B$ is $\begin{bmatrix} x^{b-c} & -y^{a+b-c} \end{bmatrix}. $
Therefore, 
\begin{align*}
e(M)-\lambda(F/M)&=\lambda(M/N)\\
&=\lambda(R/I_1(B)) \quad \text{by Lemma \ref{keylem}} \\
&=\lambda(R/(x^{b-c}, y^{a+b-c})) \\
&=a(b-c)+(b-c)^2. 
\end{align*}
Thus, we have the assertions. 
\end{proof}

In this example, if we further assume that $a+b \leq 2c$, then a little thought shows that the equality $M^2=NM$ holds in $\Sym^2_R(F)=R^3$. 
Therefore, if we take $a=2, b=4, c=3$, then the module $M=M(2, 4, 3)$ satisfies 
$M^2=NM$, but it does not satisfy the length-multiplicity identity
 (\ref{lenmultid}). 
Hence, the equality $M^2=NM$ is not a sufficient condition for the length-multiplicity identity. 

This cannot occur in the case where $M$ is a direct sum of two $\fkm$-primary ideals. 
In fact, when $M=I \oplus J $, one can show that the equality $M^2=NM$ holds for some minimal reduction $N$ of $M$
 if and only if there exist $a \in I$ and $b \in J$ such that $IJ=aJ+bI$ and both of the ideals $I, J$ have reduction number one.  
Therefore, the equality $M^2=NM$ implies the length-multiplicity identity when $M=I\oplus J$.
Thus, the module $M(2,4,3)$ is not a direct sum of ideals, hence, it is indecomposable. 

The equality $M^2=NM$ is also not a necessary condition for the length-multiplicity identity even in the case where $M=I\oplus J$. 
Indeed, if we take $I=(x,y^6)$ and $J=(x^3,xy^4, y^6)$, then $IJ=xJ+y^6J$ but $J^2 \neq (x^3,y^6)J$. Hence, $M=I\oplus J$ satisfies the length-multiplicity identity but 
$M^2\neq NM$ for any minimal reduction $N$ of $M$. 
This raises the natural question stated below.

\begin{Question} What is a characterisation of  torsion-free modules $M$ that satisfy the length-multiplicity identity?
\end{Question}

%%%%%%%%%%%%%%%%%
\section*{Acknowledgments}
The authors would like to thank the referee for the helpful comments. 
The first named author was supported by JSPS KAKENHI Grant Number JP23K03054.

%%%%% References %%%%%


\begin{thebibliography}{99}

\bibitem[BchRim1964]{BchRim1964}
Buchsbaum, David A. and Rim, Dock S.
\newblock{A generalized Koszul complex. {II}. Depth and multiplicity. }
\newblock{\em  Transactions of the American Mathematical Society} 111 : 197--224, 1964. 

\bibitem[Hys2022]{Hys2022}
Hayasaka, Futoshi.
\newblock{Indecomposable integrally closed modules of arbitrary rank over a two-dimensional regular local ring.}
\newblock{\em Journal of Pure and Applied Algebra} 226, 2022.


\bibitem[HysHry2010]{HysHry2010}
Hayasaka, Futoshi and Hyry, Eero.
\newblock{A note on the Buchsbaum-Rim multiplicity of a parameter module.}
\newblock {\em Proceedings of the American Mathematical Society} 138 : 545--551, 2010.

\bibitem[HysKdy2023]{HysKdy2023}
Hayasaka, Futoshi and Kodiyalam, Vijay.
\newblock{Indecomposable integrally closed modules of rank 3 over two-dimensional regular local rings.}
\newblock {\em Journal of Pure and Applied Algebra}, 228 : Paper No. 107612, 2024.

\bibitem[HnkSwn1995]{HnkSwn1995}
Huneke, Craig and Swanson, Irena. 
\newblock{Cores of ideals in 2-dimensional regular local rings.}
\newblock{\em Michigan Mathematical Journal} 42 : 193--208, 2006.

\bibitem[HnkSwn2006]{HnkSwn2006}
Huneke, Craig and Swanson, Irena. 
\newblock{Integral closure of Ideals, Rings, and Modules.}
\newblock{\em London Mathematical Society, Lecture Note Series 336, Cambridge
University Press,} 2006.

\bibitem[KtzKdy1997]{KtzKdy1997}
Katz, Daniel and Kodiyalam, Vijay.
\newblock{Symmetric powers of complete modules over a two-dimensional regular local ring.}
\newblock{\em Transactions of the American Mathematical Society} 349 : 747--762, 1997. 
              

\bibitem[KrbRes1996]{KrbRes1996}
Kirby, David and Rees, David.
\newblock{Multiplicities in graded rings. {II}. {I}ntegral equivalence and the {B}uchsbaum-{R}im multiplicity}
\newblock{\em Mathematical Proceedings of the Cambridge Philosophical Society} 119 : 425--445, 1996. 

\bibitem[Kdy1995]{Kdy1995}
Kodiyalam, Vijay.
\newblock{Integrally closed modules over two-dimensional regular local rings.}
\newblock{\em Transactions of the American Mathematical Society} 347(9) : 3551--3573, 1995.

\bibitem[KdyMhn2015]{KdyMhn2015}
Kodiyalam, Vijay and Mohan, Radha.
\newblock{Lengths and multiplicities of integrally closed modules over a two-dimensional regular local ring.}
\newblock{\em Journal of Algebra} 425 : 392--409, 2015.

\bibitem[Lpm1988]{Lpm1988}
Lipman, Joseph.
\newblock{On complete ideals in regular local rings.}
\newblock{In {\em Algebraic Geometry and Commutative Algebra, Vol. I}}, Kinokuniya, Tokyo :  203--231, 1988.

\bibitem[Lpm1994]{Lpm1994}
Lipman, Joseph.
\newblock{Adjoints of ideals in regular local rings.}
\newblock{\em Mathematics Research Letters} 1(6) : 739--755, 1994.

\bibitem[Mhn1997]{Mhn1997}
Mohan, Radha. 
\newblock{The core of a module over a two-dimensional regular local ring}
\newblock{\em Journal of Algebra} 189 : 1--22, 1997. 

\bibitem[Res1987]{Res1987}
Rees, David.
\newblock{Reduction of modules.}
\newblock{\em Mathematical Proceedings of the  Cambridge Philosophical Society} 101 : 431--449, 1987.

\bibitem[Sbt2019]{Sbt2019}
Shibata, Kohsuke. 
\newblock{The core of a module and the adjoint of an ideal over a two dimensional regular local ring. }
\newblock{\em arXiv.190705093} : 2019. 


\bibitem[Vrm1990]{Vrm1990}
Verma, Jugal K.
\newblock{Joint reductions of complete ideals.}
\newblock{\em Nagoya Mathematics Journal} 118: 155--163, 1990.

\bibitem[Zrs1938]{Zrs1938}
Zariski, Oscar.
\newblock{Polynomial {I}deals {D}efined by {I}nfinitely {N}ear {B}ase {P}oints.}
\newblock{\em American Journal of Mathematics}, 60: 151--204, 1938. 

\bibitem[ZrsSml1960]{ZrsSml1960}
Zariski, Oscar and Samuel, Pierre.
\newblock{Commutative algebra. {V}ol. {II}.}
\newblock{\em The University Series in Higher Mathematics} D. Van Nostrand Co., Inc., Princeton, N.J.-Toronto-London-New York, 1960.

\end{thebibliography}
\end{document}